\providecommand{\B}{}
\renewcommand{\B}{\bm}
\newcommand{\D}{\partial}
\renewcommand{\le}{\leqslant}
\renewcommand{\ge}{\geqslant}
\newtheorem{theorem}{Theorem}
\newtheorem{lemma}[theorem]{Lemma}
\theoremstyle{definition}
\newcommand{\He}{H\!e}
\newcommand{\J}{\mathcal{J}}
\begin{document}

\title{Computation of the solution\\ for the 2D acoustic pulse propagation}

\author{P.A.~Bakhvalov}

\date{April 16, 2024}

\maketitle

\begin{abstract}
We consider the 2D acoustic system with the Gaussian pulse as the initial data. This case was proposed at the first Workshop on benchmark problems in computational aeroacoustics, and it is commonly used for the verification of numerical methods. We construct an efficient algorithm to evaluate the exact solution for a given time $t$ and distance $r$. For a precision $\varepsilon$, it takes $c\ln(1/\varepsilon)$ operations (the evaluation of a Bessel function counts as one operation) where $c$ does not depend on $t$ and $r$. This becomes possible by using three different integral representations and an asymptotic series depending on $t$ and $r$.
\end{abstract}

\medskip

\sloppy

\section{Introduction}

The verification of a computational method or its program implementation implies the comparison of the numerical results with some reference data. Verification usually goes along with the progressive increase of the complexity and begins with simple cases \cite{AIAAGuide,Masatsuka}. At the first stage, it is favourable if the exact solution is available at any point of the computational domain. So cases with closed-form exact solutions are widely used. Integral form solutions are equally convenient provided that they may be computed precisely and efficiently.

The computational costs of the exact solution evaluation become more important for high-order finite-volume methods. In these schemes, one compares the numerical solution at a mesh cell with the cell average of the exact solution. Thus, to evaluate the average at one cell, one needs to evaluate the exact solution at several points, depending on the order of the scheme. A similar difficulty presents in some finite-element schemes where the numerical solutions are compared to the $L_2$-projection of the exact solution.

The first Workshop on benchmark problems in computational aeroacoustics \cite{CAA1995}, which held in 1995, considered several verification cases. Cases 3.1 and 3.2 in \cite{CAA1995} were the Cauchy problems for the linearized Euler equations with a uniform background flow. A general solution of these equations is a combination of acoustic, vortex, and entropy components, see \cite{Tam1993}. In the coordinate system of the background flow, the vortex and entropy components are steady, and the acoustic one satisfies the acoustic system. In particular, for the tests in \cite{CAA1995}, we come to the Cauchy problem 
\begin{equation}
\frac{\D p'}{\D t} + \nabla \cdot \B{u}' = 0,
\quad
\frac{\D \B{u}'}{\D t} + \nabla p' = 0,
\label{eq:acoustic1}
\end{equation}
in $\mathbb{R}^2$ with the initial data
\begin{equation}
p'(0,\B{r}) = \exp(-\B{r}^2/2), \quad \B{u}'(0,\B{r}) = 0.
\label{eq:acoustic2}
\end{equation}
Here $p'$ and $\B{u}'$ are the pressure and velocity pulsations, correspondingly. This benchmark problem is widely used \cite{Sescu2008, Ramirez2018, Zhang2019, Song2022}.

The solution of \eqref{eq:acoustic1}--\eqref{eq:acoustic2} is given by 
\begin{equation}
\left(\begin{array}{c}p'(t,r) \\ u_r'(t,r) \end{array}\right) = \int\limits_0^{\infty} \omega e^{-\omega^2/2} 
\left(\begin{array}{c}J_0(r\omega) \cos(t\omega) \\ J_1(r\omega) \sin(t\omega) \end{array}\right) d\omega,
\label{eq_form1}
\end{equation}
where $J_j(x)$ are the Bessel functions of the first kind and index $j$.
In \cite{CAA1995, Tam1993} it was suggested to apply a quadrature to \eqref{eq_form1}. For $t, r \le 20$, Gauss quadratures are fast and accurate. However, for bigger $t$ or $r$ they become inefficient.

Quadratures for integrals involving $J_{\nu}(r\omega)$, $r \gg 1$, were constructed in \mbox{\cite{Asheim2013, Chen2018, Kang2019}}, see also references therein. These methods are connected with the ones for integrals of the form $\int f(x) e^{i \omega g(x)}dx$, $\omega \gg 1$, which may be found in \cite{Huybrechs2006, Gao2017}. The expression \eqref{eq_form1} contains products of two different oscillatory kernels; the evaluation of such integrals is studied in \cite{Li2020, Khan2023}. All these methods are based on the continuation of the integrand to the complex domain and the proper choice of the integration path. They are well capable to evaluate the integral \eqref{eq_form1}, but their computational costs is high. For instance, the evaluation of one oscillatory integral similar to \eqref{eq_form1} in \cite{Li2020} takes $0.016$ seconds. The exact form of \eqref{eq_form1} does not help to simplify these methods, so for our purpose they take too much CPU time.

In this paper we are concerned with the algorithm to evaluate the solution of \eqref{eq:acoustic1}--\eqref{eq:acoustic2} for given $t,r \ge 0$ with a requested absolute precision $\varepsilon$. The algorithm is designed for this particular problem and cannot be easily generalized to evaluate any other integral. In other words, we consider $p'(t,r)$ and $u_r'(t,r)$ defined by \eqref{eq_form1} as special functions and want to approximate it. Our method takes approximately $10^{-6}$ seconds in the double precision and $10^{-4}$ seconds in the double-double precision.

The idea of the algorithm is to use integral transforms and thus avoid the approximation of oscillatory integrals. The algorithm takes at most $c\ln (1/\varepsilon)$ operations with $c$ independent of $t$ and $r$, provided that the evaluation of a Bessel function counts as one operation. We do not intend to get the relative precision $\varepsilon$, but this may be achieved by a small modification of our method. We also take precautions to avoid accumulating the arithmetic errors. The algorithm is implemented in the open-source ColESo library (Collection of Exact Solutions for verification of numerical algorithms for simulation of compressible flows) \cite{Bakhvalov2022ColESo}. It is verified for the double precision and the double-double precision arithmetics.

The rest of the paper is structured as follows. Section~\ref{sect:overview} gives an overview of the algorithm. In Section~\ref{sect:asympt} we present the asymptotic series as $t \rightarrow \infty$ for small $r$. In Section~\ref{sect:solution} we prove the integral expressions we use. In Section~\ref{sect:accuracy} we study the accuracy of quadrature rules. Section~\ref{sect:experiment} contains the results of the actual accuracy (i. e. in a finite precision arithmetics) and the computational time.

\section{Overview of the algorithm}
\label{sect:overview}

Depending on $t$ and $r$, we use three integral forms of the solution or an asymptotic series. In this section we present them and describe which method we use for $t$ and $r$ given. The proofs of these forms are left to the following sections. 

The first form is given by \eqref{eq_form1}.

The second form is
\begin{equation}
p'(t,r) = \J_0(t,r) + \J_0(-t,r), \quad
u_r'(t,r) = \J_1(t,r) - \J_1(-t,r),
\label{eq_tamnew_2r}
\end{equation}
\begin{equation}
\J_j(t,r) = \frac{1}{\sqrt{2\pi}} \int\limits_0^{\infty} \frac{\exp(-(r-t+r\xi)^2/2)}{\sqrt{\xi(\xi+2)}} (r-t+r\xi) (1+\xi)^j d\xi.
\label{eq_tamnew_3r}
\end{equation}
For $r=0$, this integral diverges for both $j=0$ and $j=1$, so this form is not applicable. 

The third form is 
\begin{equation}
\begin{gathered}
p'(t,r) = \J_{0,1}(t,r) - t^2 \J_{0,3}(t,r) + rt \J_{1,2}(t,r),
\\
u_r'(t,r)  =  -t^2 \J_{1,2}(t,r) + rt \J_{0,1}(t,r),
\end{gathered}
\label{eq_puls_bf}
\end{equation}
\begin{equation}
\J_{j,n}(t,r) = 
\int\limits_0^{1} \exp\left(-\frac{(r-t+t\xi)^2}{2}\right) 
\tilde{I}_j\left(rt(1-\xi)\right) \frac{(1-\xi)^n}{\sqrt{\xi} \sqrt{2 - \xi}} d\xi.
\label{eq_jjn}
\end{equation}
Here $\tilde{I}_j(x) = e^{-x} I_j(x)$, and $I_j(x)$ is the modified Bessel function of the first kind and index $j$.

We also use the asymptotic series as $t \rightarrow \infty$ together with the Taylor series at $r=0$. The details are presented in the next section.

Let us explain the necessity of all the representations. The integrand of \eqref{eq_form1} becomes an oscillatory function for large $t$ or $r$ making the use of this form ineffective. In reverse, the integrand of \eqref{eq_tamnew_3r} have a singularity at $\xi = -2$, which for small $t$ and $r$ deteriorates the integration accuracy (although the accumulation of the arithmetic errors due to $\vert\J_j(t,r)\vert \rightarrow \infty$ as $r \rightarrow 0$ can be avoided). The form \eqref{eq_puls_bf}--\eqref{eq_jjn} is applicable for all $t$ and $r$, however, we want to avoid the computation of  $\tilde{I}_j(x)$ for $2 < x \lesssim H^2$.

Now we define which form and which quadrature (uniform-step quadrature, the Gauss~-- Legendre quadrature, and the Gauss~-- Jacobi quadrature) is used for $t$ and $r$ given. Applying a quadrature, we crop the integration range to the interval where the integrand is essentially nonzero. Put 
\begin{equation}
H = \sqrt{-2 \ln(\varepsilon/2)}, \quad R_1 = (7.5\varepsilon)^{1/6}, \quad R_2 = 5\varepsilon^{1/10}.
\label{eq_def_H}
\end{equation}
The domains in $(t,r)$ corresponding to different methods are shown in Fig.~\ref{fig:variants}. 

\begin{figure}[t]
\begin{center}
\includegraphics[width=0.5\linewidth]{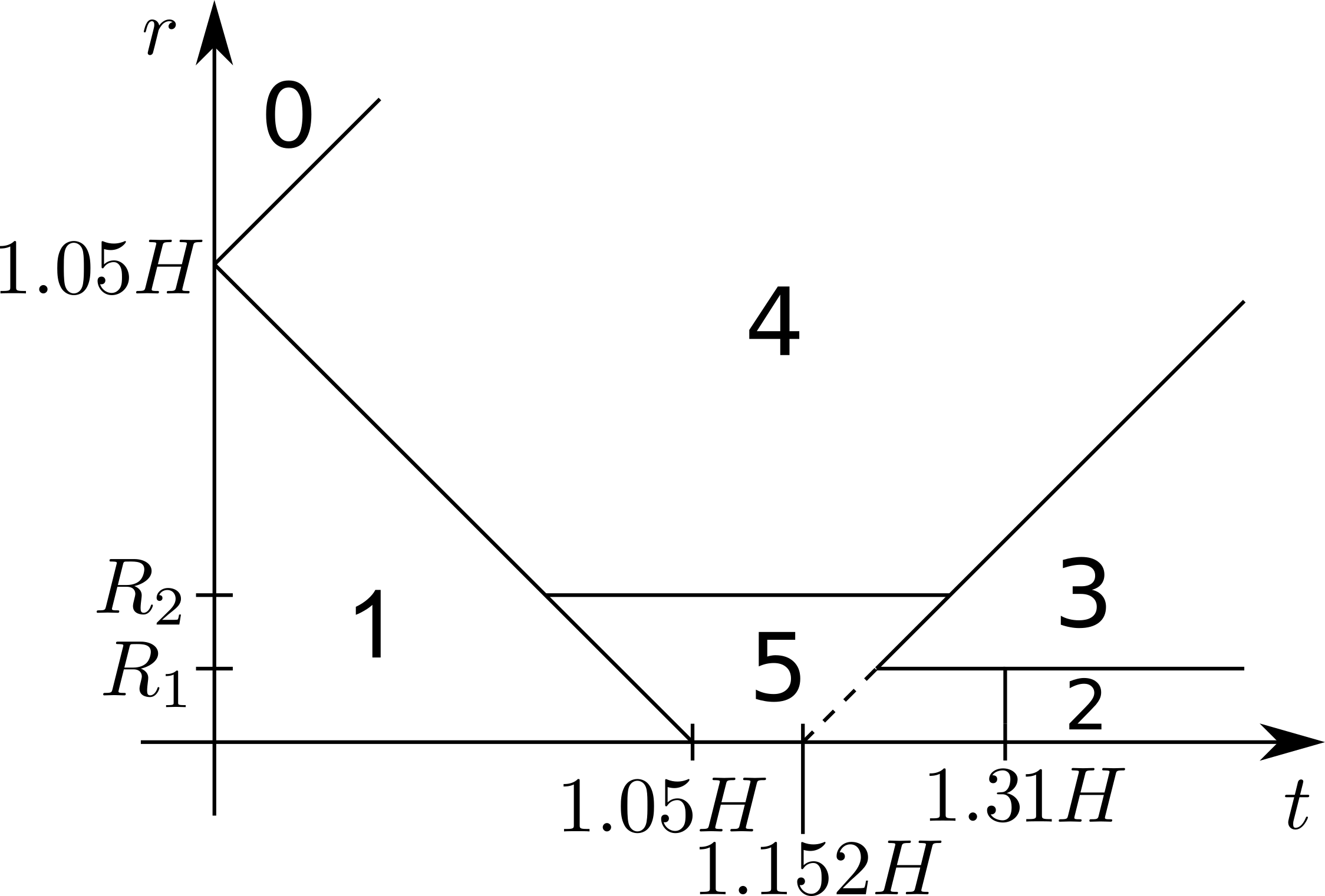}
\end{center}
\caption{Solution form used depending on $t$ and $r$. 0: $p',u_r'\approx 0$; 1: \eqref{eq_form1}; 2: series; 3, 4: \eqref{eq_tamnew_2r}--\eqref{eq_tamnew_3r}; 5: \eqref{eq_puls_bf}--\eqref{eq_jjn}}
\label{fig:variants}
\end{figure}

\begin{enumerate}
\item If $t-r > 1.152 H$, then:
\begin{enumerate}
\item if $r > R_1$, use the representation \eqref{eq_tamnew_2r}--\eqref{eq_tamnew_3r} and the uniform-step quadrature (zone 3).
\item if $r \le R_1$ and $t \ge 1.31H$, use the approximation from Section~\ref{sect:asympt} (zone 2).
\item if $r \le R_1$ and $t < 1.31H$, use the representation \eqref{eq_puls_bf}--\eqref{eq_jjn} and the Gauss~-- Legendre quadrature (zone 5).
\end{enumerate}
\item If $t-r \le 1.152 H$, then:
\begin{enumerate}
\item if $t < \varepsilon$, put $p'(t,r) = \exp(-r^2/2)$ and $u_r'(t,r) = -tr\exp(-r^2/2)$ (not shown on Fig.~\ref{fig:variants});
\item else if $t < r-1.05 H$, put $p'(t,r) = u_r'(t,r)=0$ (zone 0);
\item else if $t+r < 1.05 H$, use the representation \eqref{eq_form1} and the Gauss~-- Legendre quadrature formulas (zone 1);
\item else if $t+r \ge 1.05 H$ and $r \le R_2$, use the representation \eqref{eq_puls_bf}--\eqref{eq_jjn} and the Gauss~-- Legendre quadrature formulas (zone 5);
\item else use the representation \eqref{eq_tamnew_2r}--\eqref{eq_tamnew_3r} and the Gauss~-- Jacobi quadrature formulas (zone 4).
\end{enumerate}
\end{enumerate}

For the uniform-step quadrature rule, the number of nodes is $2M_2+1$ with $M_2 = \lceil 0.2 H^2 \rceil$. For the Gauss rules, the number of nodes is $M_3 = \lceil 0.71 H^2 \rceil$. Recalling \eqref{eq_def_H} we see that the number of quadrature nodes is $O(\ln(1/\varepsilon))$ and so is the total number of arithmetic operations provided that the evaluation of the integrands in \eqref{eq_form1}, \eqref{eq_tamnew_3r}, \eqref{eq_jjn} at one point takes $O(1)$ operations.

The constants $M_2$, $M_3$, $R_1$, $R_2$, $1.05 H$, $1.152 H$, $1.31 H$ (see Fig.~\ref{fig:variants}) are taken to obtain the result with the accuracy $\varepsilon$ provided that $\varepsilon \le 2 \cdot 10^{-16}$. To get a faster and less accurate algorithm, these constants should be calibrated for a specific $\varepsilon$.

The nodes and weights of the Gauss rules are computed by the Golub~-- Welsh algorithm \cite{Golub1968} as implemented in \cite{Elhay1987}. We assume them to be precomputed and therefore skip their construction when counting the number of arithmetic operations.

\section{Series as $r \rightarrow 0$, $t \rightarrow \infty$}
\label{sect:asympt}

Throughout this section, $\Gamma$ is the Euler gamma function.
Let $\He_n(\omega)$ be the Hermite polynomials:
$$
\He_n(\omega) = (-1)^n e^{\omega^2/2} \frac{d^n}{d\omega^n} e^{-\omega^2/2}.
$$
Since
$$
J_n(x) = \frac{1}{\pi} \int\limits_0^{\pi} \cos(n\phi - x\sin\phi)d\phi,
$$
for each $n, m \in \mathbb{N} \cup \{0\}$ there holds $\sup_x \vert d^m J_n/dx^m \vert \le 1$. Using the Taylor expansion with the Lagrange remainder, we get
\begin{equation*}
\begin{gathered}
\omega J_0(r\omega) = \left(\frac{15}{64}r^4 - \frac{3}{4}r^2 + 1\right) \He_1(\omega) + \left(\frac{5}{32} r^4 - \frac{1}{4}r^2\right)\He_3(\omega) + 
\\
+ \frac{1}{64}r^4 \He_5(\omega) + c_0(r\omega) \frac{\omega^7 r^6}{720},
\\
\omega J_1(r\omega) = \left(\frac{5}{128}r^5 - \frac{3}{16}r^3 + \frac{1}{2}r\right) \He_0(\omega) + \left(\frac{15}{128}r^5 - \frac{3}{8}r^3 + \frac{1}{2}r\right) \He_2(\omega) + 
\\
+ \left(\frac{5}{128} r^5 - \frac{1}{16}r^3\right)\He_4(\omega) + 
\frac{1}{384}r^5 \He_6(\omega) + c_1(r\omega) \frac{\omega^8 r^7}{5040}
\end{gathered}
\end{equation*}
where $\vert c_0(r\omega) \vert \le 1$, $\vert c_1(r\omega) \vert \le 1$.
Then \eqref{eq_form1} yields
\begin{equation*}
\begin{gathered}
p'(t,r) = \mathrm{Re}\left[\left(\frac{15}{64}r^4 - \frac{3}{4}r^2 + 1\right) \mathcal{I}_1(t) + \left(\frac{5}{32} r^4 - \frac{1}{4}r^2\right)\mathcal{I}_3(t) + \right.
\\
+ \left.\frac{1}{64}r^4 \mathcal{I}_5(t) \right] + \mathrm{Re} E_0,
\\
u_r'(t,r) = \mathrm{Im}\left[\left(\frac{5}{128}r^5 - \frac{1}{16}r^3 + \frac{1}{2}r\right) \mathcal{I}_0(t) + \left(\frac{15}{128}r^5 - \frac{3}{8}r^3 + \frac{1}{2}r\right) \mathcal{I}_2(t) + \right.
\\
\left.
+ \left(\frac{5}{128} r^5 - \frac{1}{16}r^3\right) \mathcal{I}_4(t) + 
\frac{1}{384}r^5 \mathcal{I}_6(t)\right] + \mathrm{Im} E_1
\end{gathered}
\end{equation*}
with
\begin{equation}
\mathcal{I}_n(t) = \int\limits_0^{\infty} \He_n(\omega) e^{-\omega^2/2} \exp(it\omega) d\omega
\label{eq_def_In}
\end{equation}
and
$$
E_j = \int\limits_0^{\infty} \omega e^{-\omega^2/2} \exp(it\omega) c_j(r\omega) \frac{(r \omega)^{6+j}}{(6+j)!} d\omega, \quad j = 0,1.
$$
From here,
$$
\vert E_j \vert \le \int\limits_0^{\infty} e^{-\omega^2/2} \omega^{7+j} d\omega \frac{r^{6+j} }{(6+j)!} = \frac{r^{6+j} }{(6+j)!} 2^{3+j/2} \Gamma(4+j/2).
$$
Note that $\Gamma(4)=6$, $\Gamma(4.5)=135\sqrt{\pi}/16 < 15$. Since this form is used for $r \le R_1 = (7.5 \varepsilon)^{1/6}$, we have $\vert E_0 \vert \le \varepsilon/2$ and $\vert E_1 \vert \le \varepsilon/2$. So we drop the terms $\mathrm{Re}E_0$ and $\mathrm{Im}E_1$ in this form.

It remains to show how to evaluate $\mathcal{I}_n(t)$.

\begin{lemma}\label{th:lemma:In}
There holds $\vert\mathcal{I}_n(t)\vert \le \sqrt{\pi/2}\ 2^{n/2}\ \Gamma(1+n/2)$.
\end{lemma}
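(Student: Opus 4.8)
The plan is to discard the oscillatory factor at once and reduce the estimate to a fixed, $t$‑free integral. Since $|e^{it\omega}|=1$ for real $t$ and $\omega$, moving the modulus under the integral sign in \eqref{eq_def_In} gives
\[
\vert\mathcal{I}_n(t)\vert \;\le\; \int_0^{\infty} \vert\He_n(\omega)\vert\, e^{-\omega^2/2}\,d\omega ,
\]
and the right‑hand side no longer depends on $t$. So it suffices to bound this weighted $L^1$‑norm of $\He_n$ on the half‑line by $\sqrt{\pi/2}\,2^{n/2}\,\Gamma(1+n/2)$.

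I would estimate the integral by Cauchy--Schwarz, splitting the Gaussian weight symmetrically as $e^{-\omega^2/2}=e^{-\omega^2/4}\cdot e^{-\omega^2/4}$:
\[
\int_0^{\infty} \vert\He_n(\omega)\vert\, e^{-\omega^2/2}\,d\omega
\;\le\;
\Bigl(\int_0^{\infty}\He_n(\omega)^2 e^{-\omega^2/2}\,d\omega\Bigr)^{1/2}
\Bigl(\int_0^{\infty} e^{-\omega^2/2}\,d\omega\Bigr)^{1/2}.
\]
The second factor is $(\pi/2)^{1/4}$. For the first I invoke the orthogonality relation for the probabilists' Hermite polynomials, $\int_{-\infty}^{\infty}\He_n(\omega)^2 e^{-\omega^2/2}\,d\omega=\sqrt{2\pi}\,n!$; as $\He_n(\omega)^2e^{-\omega^2/2}$ is even, the half‑line integral is $\tfrac12\sqrt{2\pi}\,n!$, so that factor equals $(\pi/2)^{1/4}\sqrt{n!}$. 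Multiplying yields the intermediate bound $\vert\mathcal{I}_n(t)\vert\le\sqrt{\pi/2}\,\sqrt{n!}$.

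It then remains to check the elementary inequality $\sqrt{n!}\le 2^{n/2}\,\Gamma(1+n/2)$, equivalently $\Gamma(n+1)\le 2^n\,\Gamma(1+n/2)^2$. Legendre's duplication formula rewrites this as $\Gamma\!\bigl(\tfrac{n+1}{2}\bigr)\le\Gamma\!\bigl(\tfrac12\bigr)\Gamma\!\bigl(\tfrac n2+1\bigr)$, which follows from log‑convexity of $\Gamma$ applied to the convex combination $\tfrac{n+1}{2}=\tfrac1{n+1}\cdot\tfrac12+\tfrac{n}{n+1}\bigl(\tfrac n2+1\bigr)$, once one notes $\Gamma(\tfrac12)^n\Gamma(\tfrac n2+1)\ge1$; alternatively one can treat even and odd $n$ separately using $\binom{2m}{m}\le4^m$. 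The only step requiring any care is this $\Gamma$‑function comparison; the reduction to the $t$‑free integral and the Cauchy--Schwarz estimate are routine. (One may note in passing that the constant is sharp at $n=0$: there the whole chain is an equality and indeed $\mathcal{I}_0(0)=\sqrt{\pi/2}$.)
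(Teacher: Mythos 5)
Your proof is correct, and after the first (shared) step it takes a genuinely different route from the paper. Both arguments begin by discarding the factor $e^{it\omega}$ and reducing to the $t$-free integral $\int_0^{\infty}\vert\He_n(\omega)\vert e^{-\omega^2/2}\,d\omega$. The paper then bounds $\vert\He_n(x)\vert$ pointwise via the complex integral representation $\He_n(x)=\frac{1}{\sqrt{2\pi}}\int_{-\infty}^{\infty}(x+iy)^n e^{-y^2/2}\,dy$, turns the resulting double integral into a radial one, and reads off $\sqrt{\pi/2}\,2^{n/2}\Gamma(1+n/2)$ directly as a Gamma integral --- no orthogonality and no Gamma-function comparison are needed. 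You instead use Cauchy--Schwarz together with the orthogonality relation $\int_{-\infty}^{\infty}\He_n^2\,e^{-\omega^2/2}\,d\omega=\sqrt{2\pi}\,n!$, which yields the intermediate bound $\sqrt{\pi/2}\,\sqrt{n!}$; this is actually \emph{sharper} than the stated bound (since $\sqrt{n!}\le 2^{n/2}\Gamma(1+n/2)$, with equality only at $n=0$), at the cost of the final Legendre-duplication/log-convexity step, which I have checked and which does go through (the reduction to $\Gamma(\tfrac{n+1}{2})\le\Gamma(\tfrac12)\Gamma(\tfrac n2+1)$ is correct, as is the even/odd alternative via $\binom{2m}{m}\le 4^m$). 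For the purposes of the paper --- bounding the remainder $R_{n,M}$ of the asymptotic series for $n\le 6$ and $M=\lfloor H^2\rfloor$ --- the weaker constant is all that is used, so the two proofs are interchangeable; your sharper intermediate estimate would slightly improve the remainder bound \eqref{eq_as_aux101} if one cared to propagate it.
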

\begin{proof}
Using the representation
$$
\He_n(x) = \frac{1}{\sqrt{2\pi}} \int\limits_{-\infty}^{\infty} (x+iy)^n \exp(-y^2/2) dy
$$
we get
$$
\vert\mathcal{I}_n(t)\vert \le \frac{1}{2}\int\limits_{-\infty}^{\infty} \vert\He_n(x)\vert \exp(-x^2/2) dx \le 
$$
$$
\le \frac{1}{2\sqrt{2\pi}} \int\limits_{-\infty}^{\infty} \int\limits_{-\infty}^{\infty} \vert x^2 + y^2\vert^{n/2} \exp\left(-\frac{x^2+y^2}{2}\right) dydx 
=
$$
$$
= \frac{2\pi}{2\sqrt{2\pi}} \int\limits_0^{\infty} r^{n+1} \exp(-r^2/2) dr =
\frac{2\pi}{2\sqrt{2\pi}} \int\limits_0^{\infty} (\sqrt{2x})^{n} \exp(-x) dx.
$$
Now the inequality to prove is by the definition of the Euler gamma function.
\end{proof}

For $\mathcal{I}_n(t)$, we consider the approximation
\begin{equation}
\mathcal{I}_n(t) \approx - i^{n-1} \sum\limits_{l=\lceil n/2 \rceil}^{\lfloor (M-1)/2 \rfloor} \frac{(2l-1)!!}{t^{2l-n+1}}.
\label{eq_as_In_intro}
\end{equation}
with $M = \lfloor H^2 \rfloor$. Taking $M = \infty$ in \eqref{eq_as_In_intro} makes it an asymptotic series, as it follows from the next lemma.

\begin{lemma}
Let $0 \le n < M < \infty$ and
$$
R_{n,M} = \mathcal{I}_n(t) + i^{n-1} \sum\limits_{l=\lceil n/2 \rceil}^{\lfloor (M-1)/2 \rfloor} \frac{(2l-1)!!}{t^{2l-n+1}}.
$$
Here we imply $(-1)!!=1$. Then there holds
\begin{equation}
\vert R_{n,M}(t) \vert \le \sqrt{\pi/2} \frac{1}{t^{M-n}} 2^{M/2}  \Gamma(1+M/2).
\label{eq_as_aux101}
\end{equation}
\end{lemma}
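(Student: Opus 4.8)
The plan is to show the exact identity $R_{n,M}(t) = (it)^{-(M-n)}\mathcal{I}_M(t)$ by iterated integration by parts in \eqref{eq_def_In}, and then to bound $|\mathcal{I}_M(t)|$ by the preceding lemma.

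First I would set $g_n(\omega) := \He_n(\omega)e^{-\omega^2/2} = (-1)^n (e^{-\omega^2/2})^{(n)}$, so that $g_n'(\omega) = -g_{n+1}(\omega)$ and $g_n$ decays faster than any exponential as $\omega \to +\infty$. Integrating by parts in $\mathcal{I}_n(t) = \int_0^\infty g_n(\omega) e^{it\omega}\,d\omega$ with $u = g_n(\omega)$, $dv = e^{it\omega}\,d\omega$, the boundary term at $+\infty$ vanishes and the one at $\omega = 0$ equals $\He_n(0)/(it)$, giving the one-step recursion
$$
\mathcal{I}_n(t) = \frac{1}{it}\bigl(\mathcal{I}_{n+1}(t) - \He_n(0)\bigr), \qquad n \ge 0 .
$$
Iterating this $M-n$ times (legitimate since $n < M$, and all integrals involved converge absolutely) yields
$$
\mathcal{I}_n(t) = \frac{1}{(it)^{M-n}}\,\mathcal{I}_M(t) - \sum_{k=0}^{M-n-1}\frac{\He_{n+k}(0)}{(it)^{k+1}} .
$$

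Next I would simplify the finite sum using $\He_m(0)=0$ for odd $m$ and $\He_{2l}(0) = (-1)^l(2l-1)!!$ (with the convention $(-1)!!=1$ covering $l=0$). Only the terms with $n+k=2l$ survive; as $k$ ranges over $0,\dots,M-n-1$, the index $l$ ranges over $\lceil n/2\rceil,\dots,\lfloor(M-1)/2\rfloor$, and $k+1 = 2l-n+1$. A short computation with powers of $i$, using $i^{2l-n+1} = (-1)^l i^{1-n}$, converts $\tfrac{(-1)^l}{(it)^{2l-n+1}}$ into $\tfrac{i^{n-1}}{t^{2l-n+1}}$, so the sum equals $i^{n-1}\sum_{l=\lceil n/2\rceil}^{\lfloor(M-1)/2\rfloor}\tfrac{(2l-1)!!}{t^{2l-n+1}}$. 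Substituting back and comparing with the definition of $R_{n,M}(t)$ gives $R_{n,M}(t) = (it)^{-(M-n)}\mathcal{I}_M(t)$, hence $|R_{n,M}(t)| = t^{-(M-n)}|\mathcal{I}_M(t)|$; applying Lemma~\ref{th:lemma:In} with $n$ replaced by $M$ produces exactly \eqref{eq_as_aux101}.

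I do not expect a genuine analytic obstacle here: every boundary term at $+\infty$ is killed by the Gaussian factor and all integrands are absolutely integrable. The only steps demanding care are the bookkeeping of the $i$-powers and the reindexing $m\mapsto 2l$, so that the summation limits and the exponent $2l-n+1$ come out precisely as stated, together with checking the degenerate case $n=0$ against the convention $(-1)!!=1$.
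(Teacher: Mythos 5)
Your proposal is correct and follows essentially the same route as the paper: the one-step integration-by-parts recursion $\mathcal{I}_n(t) = \frac{1}{it}(\mathcal{I}_{n+1}(t) - \He_n(0))$, its iteration to express $\mathcal{I}_n$ through $\mathcal{I}_M$, the reduction of the boundary-term sum via $\He_{2l}(0)=(-1)^l(2l-1)!!$ to the identity $R_{n,M}(t)=(it)^{n-M}\mathcal{I}_M(t)$, and the final bound from Lemma~\ref{th:lemma:In}. The bookkeeping of the $i$-powers and the reindexing you describe check out against the stated summation limits.
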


\begin{proof}
Integration by parts in \eqref{eq_def_In} gives
$$
\mathcal{I}_n(t) = \frac{1}{it} \left. \He_n(\omega) e^{-\omega^2/2} \exp(it\omega)\right\vert_{0}^{\infty} - \frac{1}{it} \int\limits_0^{\infty} \frac{d}{d\omega}\left(\He_n(\omega) e^{-\omega^2/2}\right) \exp(it\omega) d\omega 
=
$$
$$
=
\frac{-1}{it} \He_n(0) - \frac{-1}{it} \int\limits_0^{\infty} \He_{n+1}(\omega) e^{-\omega^2/2}  \exp(it\omega) d\omega = \frac{1}{it}(\mathcal{I}_{n+1}(t) - \He_n(0)).
$$
Hence,
$$
\mathcal{I}_n(t) = \frac{1}{(it)^{M-n}} \mathcal{I}_M(t) - \sum\limits_{k=n}^{M-1} \frac{1}{(it)^{k-n+1}} \He_k(0).
$$
There holds $\He_k(0) = 0$ for an odd $k$ and $\He_k(0) = (-1)^{k/2} (k-1)!!$ for an even $k$. Therefore,
$$
\mathcal{I}_n(t) = \frac{1}{(it)^{M-n}} \mathcal{I}_M(t) - i^{n-1} \sum\limits_{l=\lceil n/2 \rceil}^{\lfloor (M-1)/2 \rfloor} \frac{(2l-1)!!}{t^{2l-n+1}}
$$
and
$$
R_{n,M}(t) = (it)^{n-M} \mathcal{I}_M(t).
$$
It remains to use the estimate given by Lemma~\ref{th:lemma:In}.
\end{proof}


As stated in Section~\ref{sect:overview}, we use this series for $t \ge 1.31 H$ and $n \le 6$. Under these conditions, for $M = \lfloor H^2 \rfloor$, estimate \eqref{eq_as_aux101} yields $\vert R_{n,M}(t) \vert  \le \varepsilon/2$ provided that $\varepsilon \le 2\cdot 10^{-16}$.

Note that if $t \gg H$, then the use of $\lfloor H^2 /2 \rfloor $ terms is excessive. If a term in the sum \eqref{eq_as_In_intro} is small enough, then the rest of the terms may be dropped.

\section{Integral expressions}
\label{sect:solution}

In this section we prove three integral representation declared in Section~\ref{sect:overview}.

Introducing the wave potential $W(t,r)$, $r = \vert \B{r}\vert$, we come to the Cauchy problem
\begin{equation}
\frac{\D^2 W}{\D t^2} - \frac{1}{r}\frac{\D}{\D r}\left(r \frac{\D W}{\D r}\right) = 0, \quad t>0, \quad r>0,
\label{eq:intro:7}
\end{equation}
\begin{equation}
W\vert_{t=0} = 0, \quad \left.\frac{\D W}{\D t}\right\vert_{t=0} = -\exp\left(-\frac{r^2}{2}\right),
\label{eq:intro:id}
\end{equation}
and the pressure and velocity pulsations are given by
\begin{equation}
p'(t,\B{r}) = -\frac{\D W}{\D t}(t,r),\quad \B{u}'(t,\B{r}) = u_r'(t,r) \frac{\B{r}}{r}, \quad u_r'(t,r) = \frac{\D W}{\D r}(t,r).
\label{eq_def_pur}
\end{equation}

First, by the Fourier method, from \eqref{eq:intro:7}--\eqref{eq:intro:id} we have
\begin{equation}
W(t,r) = - \int\limits_0^{\infty} \exp\left(-\frac{\omega^2}{2}\right) \sin(\omega t) J_0(r \omega) d\omega
\label{eq_expr_W}
\end{equation}
where $J_0$ is the Bessel function of the first kind and zero index. Taking the time and radial derivatives (recall that $J_0' = J_1$) we come to \eqref{eq_form1}.

Recall the Parseval identity, which is also a definition of the Fourier transform of a distribution.
\begin{lemma}
For each $f \in S(\mathbb{R})$ and $g \in S'(\mathbb{R})$ there holds
$(f, g) = (F, G)$, where $F$ and $G$ are the Fourier images of $f$ and $g$, correspondingly.
\label{th:parseval}
\end{lemma}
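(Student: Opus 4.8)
The plan is to treat this as essentially a definitional statement and verify consistency. Recall that the Fourier transform $F$ of a Schwartz function $f \in S(\mathbb{R})$ is defined by an absolutely convergent integral, and that $F \in S(\mathbb{R})$ as well, since the Fourier transform is a continuous bijection of $S(\mathbb{R})$ onto itself. The Fourier transform $G$ of a tempered distribution $g \in S'(\mathbb{R})$ is \emph{defined} by the requirement $(F, G) = (f, g)$ for all test functions $f$; what must be checked is that this is a legitimate definition, i.e.\ that the right-hand side, viewed as a functional of $F$, is a well-defined continuous linear functional on $S(\mathbb{R})$. First I would note that the map $F \mapsto f$ (inverse Fourier transform) is linear and continuous $S(\mathbb{R}) \to S(\mathbb{R})$, so $F \mapsto (f, g)$ is the composition of a continuous linear map with the continuous functional $g$, hence continuous and linear; this shows $G \in S'(\mathbb{R})$ is well-defined by the stated identity.

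Next I would verify that this distributional definition is consistent with the classical one on the subspace where both make sense. Suppose $g$ itself lies in $S(\mathbb{R})$ (or more generally in $L^1(\mathbb{R}) \cap L^2(\mathbb{R})$, or is a nice enough function so that its classical Fourier transform $G$ is defined pointwise by an integral). Then the claimed identity $(f,g) = (F,G)$ reduces to the classical Parseval/multiplication formula
\[
\int f(x)\, g(x)\, dx = \int F(\omega)\, G(\omega)\, d\omega,
\]
which follows from Fubini's theorem: expand $F(\omega) = \int f(x) e^{-i\omega x}\,dx$ (with whatever normalization the paper fixes), substitute, and exchange the order of integration, recognizing $\int g(x) e^{-i\omega x}\,dx = G(x)$ evaluated appropriately. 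The absolute integrability needed to justify Fubini comes from $f, g \in S(\mathbb{R})$. This confirms that the distributional Fourier transform restricts to the classical one and that the identity $(f,g)=(F,G)$ holds in general by density of $S(\mathbb{R})$ in $S'(\mathbb{R})$ in the weak-$*$ sense, or simply by taking the distributional identity as the definition once consistency is checked.

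The only genuine subtlety — and the step I would be most careful about — is bookkeeping of conventions: which sign and which $2\pi$-normalization of the Fourier transform is in force, and correspondingly what pairing $(\cdot,\cdot)$ means (the bilinear pairing $\int fg$ versus the sesquilinear $\int f\bar g$; here, since we will apply this to real-valued and complex-exponential data, the bilinear $\int fg$ is the relevant one). With the convention made explicit, the identity is symmetric in the roles of time/frequency only up to a reflection $\omega \mapsto -\omega$, and I would state the lemma precisely so that later applications — pairing $\exp(-\omega^2/2)$ against oscillatory kernels to produce the representations \eqref{eq_tamnew_3r} and \eqref{eq_jjn} — pick up the correct signs. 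Beyond this, no real obstacle arises: the lemma is the standard fact that the Fourier transform on $S'(\mathbb{R})$ is the adjoint of the Fourier transform on $S(\mathbb{R})$, and its proof is the one-line continuity argument plus the classical Fubini computation sketched above.
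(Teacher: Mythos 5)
Your proposal is correct and matches the paper's treatment: the paper states this lemma without proof, explicitly noting that the identity \emph{is} the definition of the Fourier transform of a distribution, which is precisely the observation on which your argument rests. The additional verifications you supply (continuity of the adjoint functional on $S(\mathbb{R})$ and consistency with the classical transform via Fubini) are standard and correct.
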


\begin{lemma}
For each $t \ge 0$, $r > 0$, the formulas \eqref{eq_tamnew_2r}--\eqref{eq_tamnew_3r} give a solution of \eqref{eq:intro:7}--\eqref{eq_def_pur}.
\end{lemma}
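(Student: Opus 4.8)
The plan is to identify the right-hand side of \eqref{eq_tamnew_2r}--\eqref{eq_tamnew_3r} with the known Fourier-integral solution \eqref{eq_form1}, which we already know solves \eqref{eq:intro:7}--\eqref{eq_def_pur}. Since the map from \eqref{eq_expr_W} to $p'$, $u_r'$ is just differentiation, it suffices to show that the two stated integrals $\J_0(t,r) + \J_0(-t,r)$ and $\J_1(t,r) - \J_1(-t,r)$ coincide with the integrals in \eqref{eq_form1} for $p'$ and $u_r'$ respectively. Equivalently, writing $\cos(t\omega) = \tfrac12(e^{it\omega}+e^{-it\omega})$ and $\sin(t\omega) = \tfrac1{2i}(e^{it\omega}-e^{-it\omega})$, it is enough to prove the single identity
$$
\frac{1}{2}\int\limits_0^{\infty} \omega e^{-\omega^2/2} J_j(r\omega) e^{\pm it\omega}\, d\omega
=
\pm^{\,?}\,\J_j(\pm t, r)
$$
for $j = 0, 1$, after sorting out the $1/i$ factor for $j=1$; the combinations in \eqref{eq_tamnew_2r} are then immediate. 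So the whole lemma reduces to evaluating the one-sided Fourier transform $\int_0^\infty \omega e^{-\omega^2/2} J_j(r\omega) e^{is\omega} d\omega$ in closed form.

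To do this I would invoke the Parseval identity (Lemma~\ref{th:parseval}). The factor $\omega e^{-\omega^2/2} e^{is\omega}$ restricted to $\omega>0$ is (the boundary value of) a nice function, and $J_j(r\omega)$ on $\omega>0$ should be paired against it as a distribution whose Fourier image is supported where the "light cone" structure lives: recall that the Fourier transform of $J_0(r\omega)\mathbf{1}_{\omega>0}$ (suitably interpreted) is essentially $(\text{something})/\sqrt{x^2-r^2}$ on $|x|>r$, which is exactly where the factor $1/\sqrt{\xi(\xi+2)}$ in \eqref{eq_tamnew_3r} comes from under the substitution $x = r-t+r\xi$ (so that $x^2 - r^2 = r^2\xi(\xi+2)$, giving $dx = r\,d\xi$ and $\sqrt{x^2-r^2} = r\sqrt{\xi(\xi+2)}$). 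Concretely: I would compute the Fourier image $G(x)$ of the tempered distribution $\omega \mapsto J_j(r\omega)$ on $\mathbb{R}$ (or of $r\omega \mapsto J_j(r\omega)$, then rescale), pair it with the Fourier image of $f(\omega) = \omega e^{-\omega^2/2}e^{is\omega}$, which is a shifted derivative-of-Gaussian of the form $c\,(x-s)e^{-(x-s)^2/2}$, and then the Parseval pairing $(f,g) = (F,G)$ turns the $\omega$-integral into an $x$-integral over $|x|>r$ against $(x-s)e^{-(x-s)^2/2}/\sqrt{x^2-r^2}$. Substituting $x = r - t + r\xi$ (with $s = \mp t$, so $x - s = r + r\xi = r(1+\xi)$ up to sign bookkeeping — this is where the $(1+\xi)^j$ and the $(r-t+r\xi)$ prefactors appear) converts this precisely into \eqref{eq_tamnew_3r}. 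One must also handle the negative-$\omega$ part: extending $J_j(r\omega)$ by parity ($J_0$ even, $J_1$ odd) and combining the $\pm t$ contributions is what collapses a two-sided transform into the clean sum/difference in \eqref{eq_tamnew_2r}, and explains why the $\J_0(-t,r)$, $-\J_1(-t,r)$ terms are the "reflected" pieces.

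The main obstacle is making the distributional Fourier transform step rigorous: $J_j(r\omega)$ is not integrable, $\omega e^{-\omega^2/2}e^{is\omega}$ is Schwartz on $\mathbb{R}$ but its restriction to $\omega>0$ is not, and the resulting $x$-integrand has a non-integrable-looking $1/\sqrt{x^2-r^2}$ behavior at the endpoints $x = \pm r$ (integrable in fact, but needs care) and the Fourier image of $J_j$ itself has endpoint singularities at $x=\pm r$ that must be interpreted as a distribution, not a function. I expect the cleanest route is to first establish the identity for the full-line integral of a Schwartz test function against $J_j(r\omega)$ using a standard tabulated Fourier-transform pair (or derive it by writing $J_0(r\omega) = \tfrac1\pi\int_0^\pi \cos(r\omega\sin\phi)d\phi$ and doing the Gaussian $\omega$-integral first, then recognizing the $\phi$-integral as the claimed form after a change of variables), and only at the end restrict to $\omega>0$ by symmetrizing in $t$. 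A secondary, purely bookkeeping obstacle is tracking the $i$'s and signs so that the $\sin$ in \eqref{eq_form1} for $u_r'$ produces exactly $\J_1(t,r)-\J_1(-t,r)$ with the stated $(1+\xi)$ weight; I would verify this by checking the $t=0$ and large-$r$ or small-$t$ limits against \eqref{eq_form1}. Convergence of \eqref{eq_tamnew_3r} for $r>0$ is clear from the Gaussian decay in $\xi$ and the $\xi^{-1/2}$ endpoint singularity, and the divergence at $r=0$ noted in the text is consistent with $|\J_j|\to\infty$ there.
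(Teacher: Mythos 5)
Your proposal is correct and follows essentially the same route as the paper: the Parseval identity of Lemma~\ref{th:parseval} applied to the tabulated half-line Fourier transform of the Bessel kernel (the $1/\sqrt{r^2-(t+k)^2}$ pair from \cite{Gradstein1963eng}), followed by the substitution $k+t=r(1+\xi)$. The paper merely organizes this more economically by applying Parseval to the potential $W$ of \eqref{eq_expr_W} rather than to $p'$ and $u_r'$ directly, so that only the $J_0$ transform paired with a pure Gaussian is needed, taking the imaginary part automatically discards the $|k+t|<r$ region, and the $(1+\xi)^j$ weights together with the whole $j=1$ case fall out of the final $t$- and $r$-differentiation --- sparing the separate $J_1$ transform and the sign/$i$ bookkeeping you flag as the main obstacle.
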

\begin{proof}
Rewrite \eqref{eq_expr_W} as
\begin{equation}
W(t,r) = -\mathrm{Im}\,\int\limits_{-\infty}^{\infty} 
\exp\left(-\frac{\omega^2}{2}\right)
J_0(r\omega) \exp(i\omega t) \Theta(\omega) d\omega,
\label{eq_Lessel_sinus222}
\end{equation}
where $\Theta$ is the Heaviside function. There holds
\begin{equation}
\frac{1}{\sqrt{2\pi}} 
\int\limits_{-\infty}^{\infty}
\exp\left(-\frac{\omega^2}{2}\right) \exp(i k \omega) d\omega =
\exp\left(-\frac{k^2}{2}\right)
\label{eq_Lessel_sinus111}
\end{equation}
and
\begin{equation}
\frac{1}{\sqrt{2\pi}} 
\int\limits_{0}^{\infty} J_0(r\omega) \exp(i\omega (t+k))  d\omega 
= \frac{1}{\sqrt{2\pi}\sqrt{r^2 - (t+k)^2}},
\label{eq_Lessel_sinus}
\end{equation}
where we take the positive imaginary part if $t+k>r$ and the negative imaginary part if $t+k<-r$. Identity \eqref{eq_Lessel_sinus} for $r \ne \vert t+k \vert$ can be found in \cite{Gradstein1963eng}, Sect. 6.67. However, if we consider its left-hand side as the Fourier transform of $J_0(r\omega) \exp(i\omega t) \Theta(\omega)$, it also holds in the sense of distributions. 

The right-hand side of \eqref{eq_Lessel_sinus111} is real-valued. The right-hand side of \eqref{eq_Lessel_sinus} is real for $\vert t + k \vert < r$ and purely imaginary otherwise. Applying Lemma~\ref{th:parseval} to the integral in \eqref{eq_Lessel_sinus222} and taking the imaginary part we get 
$$
W(t,r) = \frac{1}{\sqrt{2\pi}} \left(\int\limits_{-\infty}^{-t-r} - \int\limits_{-t+r}^{\infty}\right) 
\frac{1}{\sqrt{(k+t)^2 - r^2}} \exp\left(-\frac{k^2}{2}\right) dk.
$$
By changing $k = \xi r - (t+r)$ in the first integral and $k = \xi r + (r-t)$ in the second one, we get
$$
W(t,r) = -\tilde{W}(t,r)+\tilde{W}(-t,r)
$$
with
\begin{equation}
\tilde{W}(t,r) = \frac{1}{\sqrt{2\pi}} \int\limits_{0}^{\infty}
 \frac{1}{\sqrt{\xi} \sqrt{\xi + 2}} \exp\left(-\frac{(r - t + \xi r)^2}{2}\right) d\xi.
 \label{eq_tam0_W}
\end{equation}

Taking the time and radial derivatives (see \eqref{eq_def_pur}) we come to \eqref{eq_tamnew_2r}--\eqref{eq_tamnew_3r}.
\end{proof}

To prove the last solution form, we need the Parseval identity for the Hankel transform.
\begin{lemma}
For each $f, g \in L^2(0,\infty)$ there holds
$(f, g) = (F, G)$, where $F$ and $G$ are the images of $f$ and $g$, correspondingly, under the Hankel transform.
\label{th:parseval2}
\end{lemma}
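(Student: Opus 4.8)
The plan is to deduce the Hankel--Parseval identity from the Plancherel theorem for the Fourier transform in $\mathbb{R}^d$ (the multidimensional $L^2$ counterpart of Lemma~\ref{th:parseval}), exploiting the well-known fact that, on radial functions, the $d$-dimensional Fourier transform reduces to the Hankel transform of order $\nu = d/2 - 1$. I would fix the order $\nu$ of the transform (for the present paper $\nu = 0$ and $\nu = 1$ are the relevant cases) and set $d = 2\nu + 2$, so that $d$ is a positive integer. I would use the normalization $F(\omega) = \int_0^{\infty} f(r)\, J_{\nu}(r\omega)\, r\, dr$ and read $L^2(0,\infty)$ as $L^2\big((0,\infty),\, r\, dr\big)$; the other customary normalizations differ by elementary substitutions and do not affect the argument. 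To each $f \in L^2\big((0,\infty),\, r^{d-1} dr\big)$ I would associate the radial function $\tilde f(\B{r}) = f(|\B{r}|)$ on $\mathbb{R}^d$; then $\|\tilde f\|_{L^2(\mathbb{R}^d)}^2 = \sigma_{d-1}\, \|f\|^2$ with $\sigma_{d-1} = |S^{d-1}|$, so $f \mapsto \tilde f$ is, up to the constant $\sqrt{\sigma_{d-1}}$, an isometry onto the closed subspace of radial functions in $L^2(\mathbb{R}^d)$.

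The core step is to show that, under this identification, the $d$-dimensional Fourier transform acts as the Hankel transform. For a radial Schwartz function I would write $\B\xi = \omega\, \B{e}$ with $|\B{e}| = 1$ and use the classical spherical average $\int_{S^{d-1}} e^{-i \rho \omega\, \B{e}\cdot\B\sigma}\, d\B\sigma = (2\pi)^{d/2} (\rho\omega)^{1 - d/2}\, J_{d/2 - 1}(\rho\omega)$ (for $d = 2$ this is simply $\int_0^{2\pi} e^{-i\rho\omega\cos\phi}\, d\phi = 2\pi J_0(\rho\omega)$); splitting the integral over $\mathbb{R}^d$ into radius and angle then shows that $\widehat{\tilde f}$ is radial with radial profile equal to $F$, up to the normalizing constant, which I would absorb into the definition of the transform. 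Since radial Schwartz functions are dense in the radial subspace of $L^2(\mathbb{R}^d)$ (mollify and truncate while preserving radial symmetry), this identification extends to all of $L^2$, with $F$ understood as the $L^2$-limit of the truncated integrals $\int_0^N f(r) J_{\nu}(r\omega)\, r\, dr$, precisely as in the Fourier--Plancherel construction.

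Finally, I would apply the $\mathbb{R}^d$ Plancherel identity $\int_{\mathbb{R}^d} \tilde f\, \overline{\tilde g}\, d\B{r} = \int_{\mathbb{R}^d} \widehat{\tilde f}\, \overline{\widehat{\tilde g}}\, d\B\xi$ to radial $\tilde f, \tilde g$ and pass to polar coordinates on both sides; the surface-area constants coincide and cancel, leaving
\[
\int_0^{\infty} f(r)\, \overline{g(r)}\, r\, dr \;=\; \int_0^{\infty} F(\omega)\, \overline{G(\omega)}\, \omega\, d\omega,
\]
i.e. $(f, g) = (F, G)$. If the real, bilinear pairing is intended, the same computation without conjugates gives it directly, or one polarizes the norm identity $\|f\| = \|F\|$.

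I expect the only genuine subtlety to be the definition of the Hankel transform on $L^2$: for a general $L^2$ function the defining integral need not converge absolutely, so $F$ must be fixed as the $L^2$-limit of truncations and shown to agree with the object used elsewhere in the paper. Once that is settled, the proof is a transcription of the Fourier--Plancherel theorem through the radial correspondence. A shorter alternative that avoids the Fourier detour is to note that the Hankel transform $T$ is symmetric, $(Tf, g) = (f, Tg)$ by Fubini on a dense class, and an involution, $T^2 = \mathrm{Id}$ (Hankel's inversion theorem), so that $(Tf, Tg) = (f, T^2 g) = (f, g)$; this, however, rests on citing Hankel inversion, whereas the route above is self-contained given what the paper has already established.
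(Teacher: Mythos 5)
The paper offers no proof of this lemma at all: like Lemma~\ref{th:parseval}, it is stated as a known classical fact (the Hankel--Plancherel theorem) and simply used. So there is nothing to compare your argument against; you have supplied a proof of something the author takes for granted. Your route --- embed $f$ as the radial function $\tilde f$ on $\mathbb{R}^d$, use the spherical average of $e^{-i\B{\xi}\cdot\B{r}}$ to show that the $d$-dimensional Fourier transform acts on radial profiles as a Hankel transform, then transport Fourier--Plancherel through polar coordinates --- is the standard derivation and is correct as written for $\nu=0$, $d=2$, which is the only case the paper actually invokes (the lemma is applied to the pair $e^{-\omega^2/2}J_0(r\omega)$ and $\omega^{-1}\sin(\omega t)$). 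One imprecision to repair if you keep $\nu=1$ via $d=2\nu+2$: the radial profile of $\widehat{\tilde f}$ is then not $F(\omega)=\int_0^\infty f(\rho)J_\nu(\rho\omega)\,\rho\,d\rho$ but $c_d\,\omega^{-\nu}\int_0^\infty f(\rho)J_\nu(\rho\omega)\,\rho^{\nu+1}d\rho$, i.e.\ $\omega^{-\nu}$ times the order-$\nu$ Hankel transform of $\rho^{\nu}f(\rho)$, and the relevant weight is $\rho^{d-1}d\rho=\rho^{2\nu+1}d\rho$, not the $\rho\,d\rho$ you fixed at the outset; your text conflates the two spaces. The clean fixes are either the substitution $u=\rho^{\nu}f$, which restores the order-$\nu$ Hankel pair on $L^2((0,\infty),\rho\,d\rho)$ exactly, or (better for integer $\nu$) staying in $d=2$ and applying Plancherel on the angular sector $f(r)e^{i\nu\theta}$, where $\int_0^{2\pi}e^{i\nu\theta}e^{-i\omega r\cos(\theta-\phi)}\,d\theta=2\pi(-i)^{\nu}e^{i\nu\phi}J_\nu(\omega r)$ produces the order-$\nu$ transform on the correct space directly. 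A subtlety more relevant to the paper than to your proof: $\omega^{-1}\sin(\omega t)$ is not in $L^2((0,\infty),\omega\,d\omega)$ (the integral $\int^\infty\sin^2(\omega t)\,\omega^{-1}d\omega$ diverges logarithmically), so the author's own application of the lemma strictly requires a limiting or distributional extension of the kind you describe in your final remarks.
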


\begin{lemma}
For each $t \ge 0$, $r \ge 0$, the formulas \eqref{eq_puls_bf}--\eqref{eq_jjn} give a solution of \eqref{eq:intro:7}--\eqref{eq_def_pur}.
\end{lemma}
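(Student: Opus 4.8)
The plan is to mimic the structure of the proof of the previous lemma, but now using the Hankel transform (Lemma~\ref{th:parseval2}) instead of the ordinary Fourier transform, since the radial wave equation \eqref{eq:intro:7} is naturally diagonalised by the Hankel transform of order $0$. Starting from \eqref{eq_expr_W}, I would view $W(t,r)$ as the Hankel transform (in the $r$-variable) of the function $\omega \mapsto -\omega^{-1}\exp(-\omega^2/2)\sin(\omega t)$, up to the usual normalisation; equivalently, I would write $p'(t,r)$ and $u_r'(t,r)$ from \eqref{eq_form1} as Hankel transforms of order $0$ and $1$ respectively of the functions $\omega e^{-\omega^2/2}\cos(\omega t)$ and $\omega e^{-\omega^2/2}\sin(\omega t)$. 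The key analytic input will be a closed-form (or distributional) evaluation of the relevant Hankel-type integrals: the transform that sends a Gaussian times a trigonometric kernel to something supported on the light cone, analogous to how \eqref{eq_Lessel_sinus} produced the factor $1/\sqrt{r^2-(t+k)^2}$ in the previous proof.

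Concretely, the second representation exploited the one-dimensional identity \eqref{eq_Lessel_sinus} for $\int_0^\infty J_0(r\omega)e^{i\omega(t+k)}d\omega$. Here I expect the decisive formula to be the Weber--Schafheitlin / Gegenbauer-type integral $\int_0^\infty J_0(r\omega)J_0(r'\omega)e^{i\omega s}\,d\omega$ or, more to the point, the integral $\int_0^\infty e^{-\omega^2/2}\cos(\omega t)J_0(r\omega)\,\omega\,d\omega$ re-expressed via the product formula $J_0(r\omega)$ against a second Gaussian — ultimately yielding the modified Bessel function $I_0$. This is why $\tilde I_j(x)=e^{-x}I_j(x)$ appears in \eqref{eq_jjn}: after applying Parseval for the Hankel transform to pair the Gaussian-trigonometric factor with the known transform $\frac{1}{\sqrt{2\pi}}\int_0^\infty J_0(r\omega)e^{i\omega(t+k)}d\omega$, one is left with an integral over $k$ of a Gaussian against $1/\sqrt{(k+t)^2-r^2}$ but now multiplied by a second kernel whose Hankel transform in $r'$ reproduces the $I_0(rr')$ type convolution kernel of the 2D heat/Gaussian semigroup. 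I would carry out: (i) rewrite \eqref{eq_expr_W} as a Hankel transform and identify the two $L^2(0,\infty)$ factors; (ii) compute the Hankel transform of the Gaussian-trigonometric factor, obtaining a function supported near the cone $|t-r|$ involving $1/\sqrt{\xi(2-\xi)}$; (iii) apply Lemma~\ref{th:parseval2}; (iv) change variables $k=r-t+t\xi$ to land on the interval $[0,1]$ and recognise the Bessel factor $\tilde I_j(rt(1-\xi))$; (v) differentiate in $t$ and $r$ using \eqref{eq_def_pur}, with $J_0'=J_1$ at the Bessel level translating into the recurrences for $I_0,I_1$, to extract the specific combinations $\J_{0,1},\J_{0,3},\J_{1,2}$ in \eqref{eq_puls_bf}.

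The main obstacle I anticipate is step (ii) together with the bookkeeping in step (v): the exact Hankel-transform identity needed is a Gaussian-weighted one (a ``self-Hankel'' Gaussian identity together with a shift/modulation), and getting the exponent $-(r-t+t\xi)^2/2$, the algebraic weight $(1-\xi)^n/(\sqrt\xi\sqrt{2-\xi})$, and the argument $rt(1-\xi)$ of $\tilde I_j$ all to line up simultaneously requires a careful and slightly delicate change of variables — the analogue of the $k=\xi r-(t+r)$ substitution in the previous proof but now two-parameter. A secondary subtlety is that, just as \eqref{eq_Lessel_sinus} only holds in the distributional sense at $r=|t+k|$, the Hankel-transform identity here must be justified as an $L^2$ (or tempered-distribution) statement rather than a pointwise one, and one must check the relevant factors genuinely lie in $L^2(0,\infty)$ so that Lemma~\ref{th:parseval2} applies; the decay $e^{-\omega^2/2}$ makes this routine but it should be noted. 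Finally, after obtaining the integral formula for $W$, verifying that differentiating under the integral sign is legitimate (the integrand in \eqref{eq_jjn} has only integrable singularities at $\xi=0$ and $\xi=2$, and $\tilde I_j$ is bounded) and that the initial conditions \eqref{eq:intro:id} are recovered as $t\to 0^+$ should close the argument.
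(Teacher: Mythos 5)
Your plan follows the paper's proof essentially verbatim: factor the integrand of \eqref{eq_expr_W} as $\bigl(\omega e^{-\omega^2/2}J_0(r\omega)\bigr)\cdot\bigl(\omega^{-1}\sin(\omega t)\bigr)$, apply the Hankel--Parseval identity (Lemma~\ref{th:parseval2}) using the Gaussian self-reproducing formula $\int_0^\infty \omega e^{-\omega^2/2}J_0(\omega r)J_0(\omega x)\,d\omega = e^{-(r-x)^2/2}\tilde I_0(rx)$ together with $\int_0^\infty \sin(\omega t)J_0(\omega x)\,d\omega = (t^2-x^2)_+^{-1/2}$, substitute $x=t(1-\xi)$, and differentiate via $\tilde I_0'=\tilde I_1-\tilde I_0$. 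The only slip is in your step (ii): it is the $\omega^{-1}\sin(\omega t)$ factor alone whose Hankel transform is supported inside the cone and yields the $1/\sqrt{\xi(2-\xi)}$ weight, while the Gaussian factor, kept together with $J_0(r\omega)$, produces the $e^{-(r-t+t\xi)^2/2}\,\tilde I_j$ part --- precisely the bookkeeping you anticipated having to sort out.
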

\begin{proof}
Rewrite \eqref{eq_expr_W} in the form
$$
W(t,r) = -\int\limits_0^\infty \omega  \left(\exp\left(-\frac{\omega^2}{2}\right)J_0(r\omega)
\right) \left(\frac{1}{\omega}  \sin(\omega t)\right) d\omega.
$$

There holds (see \cite{Gradstein1963eng}, eq. 6.633(2))
\begin{equation}
\begin{gathered}
\int\limits_0^\infty \omega   \exp\left(-\frac{\omega^2}{2}\right)
J_0(\omega r) J_0(\omega x) d\omega = 
\\
=
\exp\left(-\frac{r^2+x^2}{2}\right) 
I_0\left(\frac{2rx}{2}\right)
=
\exp\left(- \frac{(r-x)^2}{2}\right) 
\tilde{I}_0\left(rx\right).
\end{gathered}
\nonumber
\end{equation}
where \mbox{$\tilde{I}_j(z) = e^{-z} I_j(z)$} and $I_j(z)$ is the modified Bessel function of index $j$. 
Also there holds
$$
\int\limits_0^\infty \sin(\omega t) J_0(\omega x) d\omega = \frac{1}{\sqrt{(t^2 - x^2)_+}},
$$
which is a partial case of \eqref{eq_Lessel_sinus}. By Lemma~\ref{th:parseval2} we get
\begin{equation*}
W(t,r) = - \int\limits_0^{t} x \exp\left(-\frac{(r-x)^2}{2}\right) 
\tilde{I}_0\left(rx\right) \frac{1}{\sqrt{t^2 - x^2}} dx.
\end{equation*}
Changing $x = t(1-\xi)$ we get $W(t,r) = -t \J_{0,1}(t,r)$ 
where $\J_{0,1}$ is defined by \eqref{eq_jjn}. It remains to take the time and radial derivatives using that $I_0' = I_1$ and hence $\tilde{I}_0' = \tilde{I}_1 - \tilde{I}_0$.
\end{proof}

Another way to obtain \eqref{eq_puls_bf}--\eqref{eq_jjn} is by the Kirchhoff formula for the Cauchy problem for the 2D wave equation.

The form presented in Section~\ref{sect:asympt} also can be considered as the application of the Parseval identity, with respect to the series with the Hermite polynomials.

\section{Accuracy of the numerical integration}
\label{sect:accuracy}

In this section we study the accuracy of the quadrature formulas in use. Thus we prove that our algorithm gives the solution with the precision $\varepsilon$.

\subsection{General estimates}

\begin{lemma}\label{th:accuracy1}
Let $0 < h \le \pi$, $n \in \mathbb{N}$. Denote $L = (n+1/2)h$.
Let $f(z)$ be holomorphic at $\vert \mathrm{Re}z\vert  \le L$, $\vert \mathrm{Im}z\vert  \le 2\pi/h$. 
Consider the integral
\begin{equation*}
I = \int\limits_{-\infty}^{\infty} f(x) \exp(-x^2/2) dx
\end{equation*}
and its $(2n+1)$-point approximation
$$
Q = h\sum\limits_{k=-n}^{n} \exp\left(-\frac{(kh)^2}{2}\right) f(kh).
$$
Then there holds
\begin{equation*}
\begin{gathered}
\vert  I - Q\vert  \le \left \vert \left(\int\limits_{-\infty}^{-L}+\int\limits_{L}^{\infty}\right) f(x) \exp(-x^2/2) dx\right\vert
+
\\
+ \frac{4h}{\pi}\exp\left(-\frac{L^2}{2}\right) f_1 + 5.2\exp\left(-\frac{2\pi^2}{h^2}\right) f_0
\end{gathered}
\end{equation*}
where
\begin{equation}
f_0 = \max\limits_{\vert \mathrm{Re}z\vert  \le L, \ \vert \mathrm{Im}z\vert  = 2\pi/h} \vert f(z)\vert , \quad
f_1 = \max\limits_{\vert \mathrm{Re}z\vert  = L, \ \vert \mathrm{Im}z\vert  \le 2\pi/h} \vert f(z)\vert .
\label{eq_def_f0f1}
\end{equation}
\end{lemma}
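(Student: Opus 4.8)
The plan is to split the quadrature error into three contributions. First I would write $I - Q = (I - I_L) + (I_L - Q)$ where $I_L = \int_{-L}^{L} f(x)\exp(-x^2/2)\,dx$; the first term is exactly the tail integral $\left(\int_{-\infty}^{-L} + \int_L^\infty\right) f(x)\exp(-x^2/2)\,dx$, which is the first term on the right-hand side. So the work is to bound $I_L - Q$, the error of the $(2n+1)$-point midpoint/trapezoidal-type rule on $[-L,L]$ applied to the analytic function $g(z) = f(z)\exp(-z^2/2)$.

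The standard tool here is the contour-integral representation of the quadrature error for the uniform-step rule. For the equispaced rule with nodes $kh$, $k=-n,\dots,n$, one has an Euler–Maclaurin / Poisson-summation style identity: the error $I_L - Q$ equals a contour integral of $g(z)$ against a kernel with poles at the nodes — concretely one uses that $h\sum_k \phi(kh)$ relates to $\int \phi$ plus terms involving $\int \phi(x) e^{2\pi i m x/h}\,dx$ for $m \ne 0$ (the aliasing terms), and the truncation to $|k|\le n$ produces boundary terms at $\pm L$. I would deform the contour to the rectangle with horizontal sides at $\mathrm{Im}\,z = \pm 2\pi/h$ and vertical sides at $\mathrm{Re}\,z = \pm L$, which is exactly the region where $f$ is assumed holomorphic. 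On the horizontal sides, $|\exp(-z^2/2)| = \exp(-(x^2 - (2\pi/h)^2)/2) = \exp(2\pi^2/h^2)\exp(-x^2/2)$, but this is multiplied by the decay $e^{-2\pi \cdot 2\pi/h^2} = e^{-4\pi^2/h^2}$ coming from the oscillatory kernel $1/(e^{2\pi i z/h}-1)$ type factor, giving a net $\exp(-2\pi^2/h^2)$; bounding $|f|$ there by $f_0$ and the remaining $\int \exp(-x^2/2)\,dx$-type integral by a constant yields the $5.2\exp(-2\pi^2/h^2) f_0$ term. On the vertical sides, $|\exp(-z^2/2)| \le \exp(-(L^2 - (2\pi/h)^2)/2)$ — wait, more carefully one gets a factor $\exp(-L^2/2)$ times something controlled; bounding $|f|$ there by $f_1$, integrating the kernel over the vertical segment of length $4\pi/h$, and collecting constants gives the $\frac{4h}{\pi}\exp(-L^2/2) f_1$ term.

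The key steps in order: (1) reduce to bounding the finite-interval quadrature error via the exact tail split; (2) write the quadrature error as a contour integral using the standard kernel for equispaced quadrature (poles at the nodes, residues reproducing $Q$, with the integral over a large contour reproducing $I_L$ up to boundary corrections at $\pm L$); (3) deform to the rectangle $|\mathrm{Re}\,z|\le L$, $|\mathrm{Im}\,z|\le 2\pi/h$ and split into horizontal and vertical parts; (4) estimate each part using $|\exp(-z^2/2)|$ on the respective sides together with the bounds $f_0, f_1$, and track the numerical constants $4/\pi$ and $5.2$. The condition $h \le \pi$ ensures $2\pi/h \ge 2$ so the horizontal contour clears the relevant structure, and $L = (n+1/2)h$ places the vertical sides exactly midway between node $nh$ and the (absent) node $(n+1)h$, which is what makes the boundary terms clean.

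The main obstacle I expect is step (2)–(3): getting the precise kernel so that truncating the sum to $|k|\le n$ produces exactly the stated boundary term at $\pm L$ with constant $4/\pi$ (rather than a messier expression), and verifying the constant $5.2$ for the aliasing term — this requires carefully choosing the periodic kernel (something like $\frac{1}{2}\cot(\pi z/h)$ or $\frac{1}{e^{-2\pi i z/h}-1}$ on the appropriate half-planes) and bounding $\left|\int_{-L}^{L}\exp(-x^2/2)\,dx\right| < \sqrt{2\pi}$ together with the geometric factor $\frac{1}{1 - e^{-4\pi^2/h^2}} \le \frac{1}{1-e^{-4\pi^2}}$, so that $\sqrt{2\pi}/(1-e^{-4\pi^2}) < 5.2$ (using $h \le \pi$). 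Everything else is routine estimation.
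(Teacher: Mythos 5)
Your plan is essentially the paper's own proof: split off the tails $|x|>L$, then represent the truncated-sum error as a contour integral of $f(z)\exp(-z^2/2)$ against the kernel $1/(1-\exp(-2\pi iz/h))$ over the rectangle with corners $\pm L\pm 2\pi i/h$, exploiting $L=(n+1/2)h$ so the vertical sides fall midway between nodes, and estimating the horizontal and vertical sides separately to get the $5.2\exp(-2\pi^2/h^2)f_0$ and $\frac{4h}{\pi}\exp(-L^2/2)f_1$ terms. The only caveats are routine constant-tracking (with $h\le\pi$ the relevant geometric factor is $1/(1-e^{-4})$, not $1/(1-e^{-4\pi^2})$, and a factor of $2$ enters from combining the upper and lower halves of the contour), which you already flagged.
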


The proof of Lemma~\ref{th:accuracy1} is given in Appendix. It follows \cite{Goodwin1949}, where a similar result was proved for $n = \infty$ and fixed $h$.

Now let $w\in L^1(-1,1)$ be positive a. e. in $(-1,1)$. 
Let $f \in C^2[-1,1]$. Consider the integral
$$
I(f) = \int\limits_{-1}^1 f(x) w(x) dx
$$
and its $m$-point Gaussian quadrature approximation $Q_m(f)$. 

Let $T_n(x) = \cos(n \arccos x)$, $x \in [-1,1]$, be the Chebyshev polynomials. Then there holds
\begin{equation*}
f(x) = \frac{a_0}{2} + \Bigl.\sum\limits_{n=1}^{\infty}\Bigr. a_n T_n(x).
\end{equation*}
The coefficients $a_n$ are given by 
$$
a_n = \frac{2}{\pi}\int\limits_{-1}^1 \frac{f(x) T_n(x)}{\sqrt{1-x^2}} dx, \quad n \in \mathbb{N} \cup \{0\}.
$$
Since Gaussian quadratures are exact for $f = T_n$, $n \le 2m-1$, we get
\begin{equation}
\vert Q_m(f) - I(f) \vert = \left\vert \sum\limits_{k=m}^{\infty} a_{2k} (Q_m(T_{2k}) - I(T_{2k})) \right\vert
\le
2\mu_0 \sum\limits_{k=m}^{\infty} \vert a_{2k} \vert
\label{eq_err_preestimate}
\end{equation}
where
\begin{equation*}
\mu_0 = \int\limits_{-1}^1 w(x)dx
\end{equation*}
(for a detailed proof see formulas 3.17 and 3.18 in \cite{Hunter1995}).

For $a_n$, we use estimates based on an elliptic contour, which are obtained in \cite{Eliott1966}. That paper does not state bounds for $a_n$; instead, it gives approximate values $\tilde{a}_n$ such that $\tilde{a}_n / a_n \rightarrow 1$ as $n \rightarrow \infty$.  Together with \eqref{eq_err_preestimate}, these results immediately yield the accuracy estimates formulated below as Lemma~\ref{th:accuracy2} and Lemma~\ref{th:accuracy3}.

Let $\rho = 1+\sqrt{2}$ and $\Gamma$ be the ellipse with the focal points $z = \pm 1$ and semi-axes $\sqrt{2}$ and $1$.

\begin{lemma}\label{th:accuracy2}
Let $f(z)$ be holomorphic inside $\Gamma$.
Then there holds
\begin{equation*}
\vert Q_m(f) - I(f) \vert \le 4 C_a \mu_0 \frac{\max\limits_{\Gamma} \vert f(z)\vert}{{\rho^{2m}}} \frac{\rho}{\rho-1}
\end{equation*}
where $C_a \rightarrow 1$ as $m \rightarrow \infty$.
\end{lemma}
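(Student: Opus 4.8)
The plan is to combine the Chebyshev-coefficient error bound \eqref{eq_err_preestimate} with a contour estimate for the coefficients $a_n$ of a function holomorphic inside the ellipse $\Gamma$. Recall that if $x = \tfrac12(u + u^{-1})$ maps the exterior of the unit circle conformally onto the exterior of $[-1,1]$, then the image of the circle $|u| = \rho$ is precisely an ellipse with semi-axes $\tfrac12(\rho + \rho^{-1})$ and $\tfrac12(\rho - \rho^{-1})$; for $\rho = 1 + \sqrt2$ one checks $\tfrac12(\rho + \rho^{-1}) = \sqrt2$ and $\tfrac12(\rho - \rho^{-1}) = 1$, so this is exactly the ellipse $\Gamma$ in the statement. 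Writing $f(x) = \sum a_n' u^n$ in the variable $u$ (with $T_n(x) = \tfrac12(u^n + u^{-n})$), Cauchy's estimate on $|u| = \rho$ gives $|a_n| \le 2 \max_{\Gamma}|f| / \rho^n$ for $n \ge 1$. This is the classical bound behind \cite{Eliott1966}; the refinement in that paper replaces the crude constant by $C_a \to 1$, which is why the final statement carries the factor $C_a$ rather than an absolute constant.

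The second step is purely arithmetic: insert the bound $|a_{2k}| \le 2 C_a \max_\Gamma|f| / \rho^{2k}$ into the right-hand side of \eqref{eq_err_preestimate} and sum the resulting geometric series,
\begin{equation*}
\vert Q_m(f) - I(f)\vert \le 2\mu_0 \sum_{k=m}^\infty |a_{2k}| \le 4 C_a \mu_0 \max_\Gamma |f| \sum_{k=m}^\infty \rho^{-2k} = 4 C_a \mu_0 \max_\Gamma |f| \, \frac{\rho^{-2m}}{1 - \rho^{-2}}.
\end{equation*}
Since $\dfrac{1}{1-\rho^{-2}} = \dfrac{\rho^2}{\rho^2-1} = \dfrac{\rho}{\rho-1}\cdot\dfrac{\rho}{\rho+1} \le \dfrac{\rho}{\rho-1}$, one obtains the claimed inequality with the factor $\rho/(\rho-1)$; the statement as printed is slightly lossy (it keeps $\rho/(\rho-1)$ instead of the sharper $\rho^2/(\rho^2-1)$), so there is nothing delicate left once the coefficient bound is in hand.

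The one real obstacle is justifying the passage from the convergent-coefficient bound to a usable one: the series $\sum a_n T_n$ only needs to converge in the $L^2$-sense implied by $f \in C^2[-1,1]$, whereas holomorphy inside $\Gamma$ (including, crucially, a neighbourhood of the boundary so that $\max_\Gamma|f|$ is finite — here I would quietly assume $f$ extends holomorphically to an open set containing the closed region bounded by $\Gamma$, which is the intended reading) is what upgrades this to the exponential bound $|a_n| \lesssim \rho^{-n}$. Strictly, one should also confirm that the tail-swapping in \eqref{eq_err_preestimate} (already granted by the cited formulas 3.17–3.18 of \cite{Hunter1995}, valid since $|Q_m(T_{2k}) - I(T_{2k})| \le 2\mu_0$) is legitimate, i.e. that $\sum |a_{2k}| < \infty$ — but this is immediate from the geometric bound. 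Tracking the precise shape of the constant $C_a$ and its convergence $C_a \to 1$ is deferred entirely to \cite{Eliott1966}; I would not reprove it here.
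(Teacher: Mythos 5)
Your proof is correct and follows essentially the same route the paper sketches: the Chebyshev tail bound \eqref{eq_err_preestimate} combined with an elliptic-contour estimate $\vert a_n\vert \le 2\max_\Gamma\vert f\vert/\rho^n$ for the coefficients, then summation of the geometric series and the observation that $\rho^2/(\rho^2-1)\le\rho/(\rho-1)$. The only (harmless) difference is that you derive the coefficient bound directly from Cauchy's estimate on $\vert u\vert=\rho$ under the Joukowski map rather than quoting the asymptotic values $\tilde a_n$ of \cite{Eliott1966}, which is where the paper's factor $C_a\to 1$ originates.
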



\begin{lemma}\label{th:accuracy3}
Let $c \in \mathbb{R} \setminus [-1,1]$, $f(z) = g(z)/\sqrt{z-c}$, $g(c) \ne 0$, and $g(z)$ be holomorphic inside $\Gamma$. Then there holds
\begin{equation}
\begin{gathered}
\vert Q_m(f) - I(f) \vert \le 4 C_a \mu_0 \frac{\max\limits_{\Gamma} \vert f(z)\vert}{{\rho^{2m}}} \frac{\rho}{\rho-1} + \\
+ 2 C_a \delta \mu_0   \frac{(c^2-1)^{1/4}}{\sqrt{2\pi m}(\vert c \vert + \sqrt{c^2-1})^{2m-2}(\vert c \vert - 1)} \vert g(c) \vert,
\end{gathered}
\label{eq_Em2}
\end{equation}
where $\delta = 1$ if $c$ is inside $\Gamma$ and $\delta = 0$ otherwise,
and $C_a \rightarrow 1$ as $m \rightarrow \infty$.
\end{lemma}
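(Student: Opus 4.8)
The plan is to combine the reduction \eqref{eq_err_preestimate}, which already bounds $|Q_m(f)-I(f)|$ by $2\mu_0\sum_{k\ge m}|a_{2k}|$, with an estimate for the Chebyshev coefficients $a_n$ of $f$; this estimate is exactly the asymptotic analysis of \cite{Eliott1966}, and below I indicate how it arises so that the two terms of \eqref{eq_Em2} become transparent. First I would recast $a_n$ as a contour integral: substituting $x=\cos\theta$ and then $u=e^{i\theta}$ in $a_n=\frac{2}{\pi}\int_{-1}^{1}f(x)T_n(x)(1-x^2)^{-1/2}\,dx$ and using that $u\mapsto\tfrac12(u+u^{-1})$ maps $|u|=1$ onto $[-1,1]$, one obtains
$$
a_n=\frac{1}{\pi i}\oint_{|u|=1} f\!\left(\tfrac12(u+u^{-1})\right)u^{-n-1}\,du .
$$
The same conformal map carries the circle $|u|=\rho$, $\rho=1+\sqrt2$, onto the ellipse $\Gamma$, and it carries the annulus $1<|u|<\rho$ bijectively onto the interior of $\Gamma$ with the slit $[-1,1]$ removed. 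The point $z=c$ has the two preimages $u_\pm=c\pm\sqrt{c^2-1}$, exactly one of which, say $u_\star$, has $|u_\star|=|c|+\sqrt{c^2-1}>1$; it lies inside $|u|=\rho$ precisely when $c$ lies inside $\Gamma$ (the case $|u_\star|=\rho$ corresponding to $c$ on $\Gamma$), and it is then the only singularity of $f\!\left(\tfrac12(u+u^{-1})\right)$ in the closed annulus $1\le|u|\le\rho$.

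Next I would push the contour $|u|=1$ outward to $|u|=\rho$. If $c$ lies outside $\Gamma$ ($\delta=0$) the integrand is holomorphic in the closed annulus, the contour moves freely, and the $ML$-estimate gives $|a_n|\le 2\rho^{-n}\max_\Gamma|f|$; summing over even $n=2k\ge 2m$ and multiplying by $2\mu_0$ produces the first term of \eqref{eq_Em2}, which is already Lemma~\ref{th:accuracy2}. If $c$ lies inside $\Gamma$ ($\delta=1$), the deformation sweeps across $u_\star$, so $a_n$ equals the $|u|=\rho$ integral, bounded as before, minus a loop integral around a cut issuing from $u_\star$, which has to be estimated separately.

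On that loop I would write $f\!\left(\tfrac12(u+u^{-1})\right)=g\!\left(\tfrac12(u+u^{-1})\right)\bigl(\tfrac12(u+u^{-1})-c\bigr)^{-1/2}$, insert $g\!\left(\tfrac12(u+u^{-1})\right)=g(c)+O(u-u_\star)$ together with the local expansion $\tfrac12(u+u^{-1})-c=\frac{\sqrt{c^2-1}}{u_\star}(u-u_\star)+O\bigl((u-u_\star)^2\bigr)$, and evaluate the resulting integral of $(u-u_\star)^{-1/2}u^{-n-1}$ by collapsing it onto the branch cut running from $u_\star$ out to $|u|=\rho$ and applying Watson's lemma. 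Its leading behaviour is $\sim C_a\,(c^2-1)^{1/4}(2\pi n)^{-1/2}|g(c)|\,|u_\star|^{1-n}$, with the hidden factor tending to $1$; this is precisely the asymptotics of $a_n$ established in \cite{Eliott1966}. Summing over even $n=2k\ge2m$ — the series is geometric up to the slowly varying factor $(2k)^{-1/2}$, which I bound by its value at $k=m$, and the elementary inequalities $|u_\star|^2-1\ge(|c|-1)(|u_\star|+1)$ and $|u_\star|+1\ge|u_\star|$ then reduce the geometric sum to the form in \eqref{eq_Em2} — and multiplying by $2\mu_0$ yields the second term.

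The one genuinely technical step, which I would take verbatim from \cite{Eliott1966}, is the steepest-descent (Watson's lemma) evaluation of the branch-cut integral with the explicit prefactor $(c^2-1)^{1/4}/\sqrt{2\pi n}$ and, more importantly, the verification that the discarded correction is \emph{uniformly} close to $1$ over the whole tail $n\ge 2m$ once $m$ is large — this is exactly the assertion $\tilde a_n/a_n\to1$ — so that a single constant $C_a\to1$ works. Everything else is the elementary contour manipulation and summation above.
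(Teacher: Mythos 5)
Your proposal is correct and takes essentially the same route as the paper: the paper gives no detailed proof of Lemma~\ref{th:accuracy3}, stating only that the reduction \eqref{eq_err_preestimate} together with Elliott's elliptic-contour estimates for the Chebyshev coefficients ``immediately yield'' the bound, and your argument is an expanded reconstruction of exactly that chain (contour form of $a_n$, deformation to $\vert u\vert=\rho$, branch-point contribution at $u_\star=c+\sqrt{c^2-1}$, geometric summation of the tail). The one technical ingredient you defer to \cite{Eliott1966} --- the $(c^2-1)^{1/4}/\sqrt{2\pi n}$ asymptotics of $a_n$ and the uniformity of the correction factor over the tail --- is the same ingredient the paper imports without proof.
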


For a review of accuracy estimates of Gaussian quadratures see \cite{Gautschi1981}.

\subsection{The case $r > t+H$}

In this case, the signal from the circle of size $H$ have not reached the point $(t,r)$. The solution for pressure and velocity pulsations are less than $\varepsilon$ and may be approximated by zero.

\subsection{Approximation of \eqref{eq_form1}. The case $t+r \le 1.05H$}
\label{sect:appr1}

Since 
$$
\left\vert \int\limits_H^{\infty} \omega e^{-\omega^2/2} J_0(r\omega) \cos(t\omega) d\omega \right\vert < \varepsilon/2,
\quad
\left\vert \int\limits_H^{\infty} \omega e^{-\omega^2/2} J_1(r\omega) \sin(t\omega) d\omega \right\vert < \varepsilon/2,
$$
we crop the integration range to $(0,H)$. The variable change $\omega = H(1+\xi)/2$ in \eqref{eq_form1} yields
$$
p' \approx I(f_p) = \int\limits_{-1}^{1} f_p(t,r,\xi) d\xi, \quad 
u'_r \approx I(f_u) = \int\limits_{-1}^{1} f_u(t,r,\xi) d\xi
$$
with
$$
\left(\begin{array}{c}f_p(t,r,\xi) \\ f_u(t,r,\xi) \end{array}\right) =  \frac{H^2}{4} (1+\xi) e^{-\frac{H^2(1+\xi)^2}{8}} 
\left(\begin{array}{c}J_0(rH(1+\xi)/2) \cos(tH(1+\xi)/2) \\ J_1(rH(1+\xi)/2) \sin(tH(1+\xi)/2) \end{array}\right).
$$

We evaluate this integral using the Gauss~-- Legendre quadrature. Its accuracy can be estimated by Lemma~\ref{th:accuracy2}. On $\Gamma$, there holds \mbox{$\vert\mathrm{Im}\xi\vert \le 1$}. Since
\begin{equation}
\vert\exp(-z^2)\vert = \exp((\mathrm{Im} z)^2 - (\mathrm{Re} z)^2), \quad z \in \mathbb{C},
\label{eq_expz2}
\end{equation}
and $\vert J_j(z)\vert \le \exp(\vert\mathrm{Im}z\vert)$, $\vert\sin z\vert \le \exp(\vert\mathrm{Im}z\vert)$, $\vert\cos z \vert \le \exp(\vert\mathrm{Im}z\vert)$, then 
$$
\max\{f_p, f_u\} \le \frac{H^2}{2} \exp\left(\frac{H^2}{8} + \frac{(r+t) H}{2}\right), \quad \xi \in \Gamma.
$$
Recall that we use the form \eqref{eq_form1} for $t+r \le 1.05H$. Then by Lemma~\ref{th:accuracy2} we get
$$
\vert Q_m(f_p) - I(f_p) \vert \le 2(2+\sqrt{2}) C_a H^2 \exp\left(\frac{5.2 H^2}{8}\right) (1+\sqrt{2})^{-2m}.
$$
Taking $m=M_3 =\lceil 0.71 H^2 \rceil +1$ we get $\vert Q_m(f_p) - I(f_p) \vert  \le C_a\varepsilon/2$, and the same holds for the velocity pulsation.

Alternatively, for the form \eqref{eq_form1} one may use Gauss~-- Hermite quadratures, but for $t+r \approx H$ the required number of nodes is approximately the same as for the Gauss~-- Legendre rules.

\subsection{Appoximation of \eqref{eq_tamnew_2r}--\eqref{eq_tamnew_3r}}

\subsubsection{The case $t>r+1.152H$, $r>R_1$}\label{sect:341} 

By the variable change $r - t + r\xi = \eta$, the integral \eqref{eq_tamnew_3r} transforms to 
$$
\mathcal{J}_j(t,r) = \frac{1}{\sqrt{2\pi} r} \int\limits_{-\infty}^{\infty} \exp(-\eta^2/2) f_j(\eta) d\eta, \quad f_j(\eta) = \eta \frac{(1+\xi)^j}{\sqrt{\xi(\xi+2)}} \Theta(\xi),
$$
where $\xi \equiv \xi(\eta) = (t + \eta)/r - 1$ and $\Theta$ is the Heaviside function. Note that $f_j(0) = 0$. Consider the approximation
\begin{equation}
\begin{gathered}
Q_j(t,r) =\frac{1}{\sqrt{2\pi} r} h\sum\limits_{k=1}^{M_2} \exp\left(-\frac{(kh)^2}{2}\right) (f_j(kh) + f_j(-kh))
\end{gathered}
\label{eq_equidistant_formula}
\end{equation}
where $M_2 = \lceil 0.2 H^2 \rceil$, $h = \sqrt{2\pi/(M_2+1/2)}$, and $L = \sqrt{2\pi(M_2+1/2)}$. 
Note the possibility that
$$
\vert f_j(kh) + f_j(-kh) \vert \ll \vert f_j(kh) \vert + \vert f_j(-kh) \vert,
$$
so instead of the direct evaluation of $f_j(kh) + f_j(-kh)$ we use its regularized form.

Now we estimate the accuracy of \eqref{eq_equidistant_formula}.
By Lemma~\ref{th:accuracy1} we get
\begin{equation}
\vert \J_j - Q_j\vert \le  \frac{1}{\sqrt{2\pi} r} (E_1 + E_2 + E_3)
\label{eq_lemma_equdistant_r}
\end{equation}
with
$$
E_1 = \left\vert\left(\int\limits_{-\infty}^{-L}+\int\limits_{L}^{\infty}\right) f_j(\eta) \exp(-\eta^2/2) d\eta\right\vert,
$$
$$
E_2 = \frac{4h}{\pi}\exp\left(-\frac{L^2}{2}\right) f_{j,1},
\quad 
f_{j,1} = \max\limits_{\vert \mathrm{Re}z\vert  = L, \ \vert \mathrm{Im}z\vert  \le 2\pi/h} \vert f_j(z)\vert,
$$
$$
E_3 = 5.2 \exp\left(-\frac{2\pi^2}{h^2}\right) f_{j,0},
\quad
f_{j,0} = \max\limits_{\vert \mathrm{Re}z\vert  \le L, \ \vert \mathrm{Im}z\vert  = 2\pi/h} \vert f_j(z)\vert.
$$

To estimate $E_1$, note that $\vert f_j(\eta) \vert \le \vert \eta \vert (1 + \xi^{-1/2}) \Theta(\xi)$ for $j = 0,1$. Then
$$
\int\limits_{\hat{H}}^{\infty} \vert f_j(\eta) \vert \exp(-\eta^2/2) d\eta
\le
\int\limits_{\hat{H}}^{\infty} \eta \exp(-\eta^2/2) \left(1 + \frac{\sqrt{r}}{\sqrt{\eta + t-r}}\right) d\eta 
\le
$$
$$
\le
\exp(-L^2/2) \left(1 + \frac{\sqrt{r}}{\sqrt{2.152H}} \right).
$$
Let $\xi_* = \xi(-L) = (t-L-r)/r$, then
$$
\int\limits_{-\infty}^{-L} \vert f_j(\eta) \vert \exp(-\eta^2/2) d\eta 
\le
L \exp(-L^2/2) \frac{r}{t} \int\limits_{0}^{\xi_*} (1 + \xi^{-1/2}) d\xi =
$$
$$
= L \exp(-L^2/2) \frac{r}{t} \left(\xi_* + \sqrt{2\xi_*}\right)
\le 
L \exp(-L^2/2) (1+\sqrt{2}).
$$
Thus
$$
r^{-1} E_1 \le 
R_1^{-1} \exp(-L^2/2) (1 + \sqrt{R_1/(2.152H)} + L(1+\sqrt{2})).
$$
One may check that the expression on the right-hand side is less than $0.6 \varepsilon$ provided that $\varepsilon \le 2\cdot 10^{-16}$ (in particular, for $\varepsilon = 2\cdot 10^{-16}$ it is approximately equal to $0.56 \varepsilon$).

Now we obtain estimates for $E_2$ and $E_3$. For $\mathrm{Re}\,\xi \ge \delta > 0$ there holds
$$
\left\vert\frac{1}{\sqrt{\xi(\xi+2)}}\right\vert \le \frac{1}{\vert\xi\vert} \le \frac{1}{\delta}, \quad 
\left\vert\frac{1+\xi}{\sqrt{\xi(\xi+2)}}\right\vert \le 1 + \frac{1}{\delta}.
$$
In our case, $\delta = \xi(-L) = (t-L-r)/r$. Then for $j=0,1$ and $l=0,1$ we have
$$
f_{j,l} \le \left(1+\frac{r}{t-L-r}\right) \left(L^2 + \left(\frac{2\pi}{h}\right)^2\right)^{1/2} \le
\sqrt{2} (1+4r) L.
$$
Here we used $h = 2\pi/L$ and $t > r + L + 0.25$. Then
$$
E_2 + E_3 \le \sqrt{2}(8 + 5.2L) \exp\left(-\frac{L^2}{2}\right) (R_1^{-1} + 4) r.
$$
For $\varepsilon \le 2 \cdot 10^{-16}$, we have $L \ge 1.13H$. From here it may be shown that $(E_2 + E_3)/r \le \varepsilon/4$, and finally \eqref{eq_lemma_equdistant_r} yields $\vert Q_j - \J_j\vert \le \varepsilon/2$.

\subsubsection{The case $r-H < t < r+1.152 H$, $t+r > 1.05H$, $r \ge R_2$} 

The integrand in \eqref{eq_tamnew_3r} is not positive, so we need to be aware the accumulation of the arithmetic errors when proceeding with the numerical integration. For $j=0$, our numerical experiments do not show this effect. However, for $j=1$, this may be a problem. So instead of  \eqref{eq_tamnew_3r}, we use the form
\begin{equation}
\J_j(t,r) = \frac{1}{\sqrt{2\pi}} \int\limits_0^{\infty} \frac{\exp(-(r-t+r\xi)^2/2)}{\sqrt{\xi(\xi+2)}} \left(\frac{r(1+\xi)-t}{(1+\xi)^j} + \frac{j}{r(1+\xi)^2}\right) d\xi.
\label{eq_tamnew_3r_reg}
\end{equation}
The prove the equivalence of \eqref{eq_tamnew_3r} and \eqref{eq_tamnew_3r_reg} for $j=1$, one needs to split \eqref{eq_tamnew_3r_reg} into two integrals and integrate the second one by parts using
$$
\int \frac{1}{(1+\xi)^2 \sqrt{\xi(\xi+2)}}d\xi = \frac{\sqrt{\xi(\xi+2)}}{\xi+1} + c.
$$

To evaluate \eqref{eq_tamnew_3r_reg} we crop the integration range to $(0,b)$ with 
$$
b = \frac{t+H}{r}-1.
$$
By the variable change $\xi(\eta) = b (\eta+1)/2$, the integral transforms to 
$$
\J_j(t,r) \approx \int\limits_{-1}^1 w(\eta) f_j(\eta)d\eta, \quad f_j(\eta) = \frac{g_j(\eta)}{\sqrt{\eta - c}}
$$
with $w(\eta) = (\eta+1)^{-1/2}$, $c = -1 - 4/b$,
$$
g_j(\eta) = \frac{1}{\sqrt{2\pi}} \exp(-(r-t+r\xi)^2/2)\left(\frac{r(1+\xi)-t}{(1+\xi)^j} + \frac{j}{r(1+\xi)^2}\right), \quad \xi \equiv \xi(\eta).
$$

To evaluate $\J_j(t,r)$, we use the Gauss~-- Jacobi quadrature rule with $M_3 = \lceil 0.71 H^2 \rceil + 1$ nodes. For the error estimate, we use Lemma~\ref{th:accuracy3}. Assume that $\varepsilon \le 2 \cdot 10^{-16}$. We have $\xi(c) = -2$ and thus
$$
\vert g_j(c)\vert = \frac{1}{\sqrt{2\pi}} \exp(-(t+r)^2/2) (t+r+r^{-1}).
$$
Since $t+r > 1.05H$ and $r \ge R_2 = 5\varepsilon^{1/10}$, we have $\vert g_j(c)\vert < \varepsilon/10$. Also $b \le 2.152H/r$ and
$$
c \le -1 - \frac{20 \varepsilon^{1/10}}{2.152 H}.
$$
From here, it can be shown that the second term in \eqref{eq_Em2} does not exceed $\varepsilon/4$.

Now estimate the maximum of $\vert g_j(z)\vert$ on $\Gamma$. From  \eqref{eq_expz2} on $\Gamma$ we get
$$
\vert g(z)\vert \le \frac{C}{\sqrt{2\pi}} \exp\left(\frac{r^2 b^2}{8}\right) \exp(-(r-t+r\mathrm{Re}z)^2/2) (\vert r-t+rz\vert\  + r^{-1})
$$
where $C = \max_{\Gamma} \max\{\vert 1 + z \vert, \vert 1 + z \vert^{-2}\} < 6$. Hence,
$$
\vert g(z)\vert \le \frac{6}{\sqrt{2\pi}}  \exp(H^2/2) (1 + R_2^{-1}).
$$
Therefore, by \eqref{eq_Em2},
$$
\vert Q_j - \J_j\vert \le 12 C_a \exp(H^2/2) (1+\sqrt{2})^{-2M_3} + \frac{\varepsilon}{2} \le (1/2+C_a/4)\varepsilon.
$$

\subsection{Approximation of \eqref{eq_puls_bf}--\eqref{eq_jjn}}

The form \eqref{eq_puls_bf}--\eqref{eq_jjn} is used for a small domain of parameters, which is a subset of the rectangle $1.05H-R_2 \le t \le 1.3H$ and $0 \le r \le R_2$, see Fig.~\ref{fig:variants}.

We crop the integration range to $(a,1)$ with
$$
a = 1 - \frac{r+H}{t} > 0,
$$
so 
$$
\J_{j,n}(t,r) \approx \int\limits_a^1 \exp\left(- \frac{(r-t+t\xi)^2}{2}\right) \tilde{I}_j(rt(1-\xi)) \frac{(1-\xi)^n}{\sqrt{\xi(2-\xi)}} d\xi.
$$
Here $\tilde{I}_j(x) = e^{-x} I_j(x)$, and $I_j(x)$  is the modified Bessel function. After scaling
$$
\xi \equiv \xi(\eta) = \frac{1+{a}}{2} + \frac{1-{a}}{2} \eta
$$
we get
\begin{equation}
\J_{j,n}(t,r) \approx 
\int\limits_{-1}^1 \frac{g_{j,n}(\eta)}{\sqrt{\eta-c}}d\eta,
\label{eq_aux_13pj}
\end{equation}
with $c = - (1+a)/(1-a) = 1 - 2t/(r+H)$, $g_{j,n}(\eta) = \hat{g}_{j,n}(1-\xi(\eta))$, 
$$
\hat{g}_{j,n}(\zeta) =   \left(\frac{1-a}{2}\right)^{1/2}  \frac{\zeta^n }{\sqrt{1+\zeta}} h_j(\zeta), \quad
h_j(\zeta) = \exp\left(- \frac{(r-t\zeta)^2}{2}\right) \tilde{I}_j(rt\zeta).
$$
For $t$ and $r$ that are the case and $\varepsilon < 2\cdot 10^{-16}$, we have $c < -1.0398$.

Let $Q_{j,n}$ be the Gauss~-- Legendre quadrature for \eqref{eq_aux_13pj} with $M_3 = \lceil 0.71 H^2 \rceil$ nodes. To estimate its accuracy, we use Lemma~\ref{th:accuracy3}. 

Using
$$
\tilde{I}_j(z) \equiv e^{-z} I_j(z) = \frac{1}{\pi}\int\limits_0^{\pi} \exp(z(\cos(\phi)-1)) \cos(j\phi) d\phi
$$
we get $\vert\tilde{I}_j(z) \vert \le  \exp(-\mathrm{Re}\,z)$ for $\mathrm{Re}\,z < 0$ and $\vert\tilde{I}_j(z) \vert \le 1$ for \mbox{$\mathrm{Re}\,z \ge 0$}. 
Using \eqref{eq_expz2} we get
\begin{equation*}
\begin{gathered}
\vert h_j(\zeta) \vert \le 
\exp\left(- \frac{(r-t\mathrm{Re}\zeta))^2 - (t\mathrm{Im}\zeta)^2}{2}\right) \max\{1, \exp(-rt\mathrm{Re}\zeta))\} =
\\
= 
\max\left\{
\exp\left(- \frac{(r-t\mathrm{Re}\zeta)^2 - (t\mathrm{Im}\zeta)^2}{2}\right),
\exp\left(- \frac{(r+t\mathrm{Re}\zeta)^2 - (t\mathrm{Im}\zeta)^2}{2}\right)
\right\}.
\end{gathered}
\end{equation*}
Thus for each $t,r \ge 0$ and $\zeta \in \mathbb{C}$ there holds $\vert h_j(\zeta)\vert \le \exp((t \mathrm{Im}\zeta)^2/2)$. 

We have
$$
\max\limits_{\eta \in \Gamma} \vert t\,\mathrm{Im}\xi(\eta)\vert = t\frac{1-{a}}{2} \max\limits_{\Gamma} \vert\mathrm{Im}\eta\vert = t\frac{1-{a}}{2}
= \frac{r+H}{2}.
$$
Hence, taking into account that $0 \le n \le 2$,
\begin{equation*}
\begin{gathered}
\max\limits_{\eta \in \Gamma} \vert g_{j,n}(\eta) \vert \le \left(\frac{1-a}{2}\right)^{1/2}  \max\limits_{\eta \in \Gamma} \frac{(1-\xi(\eta))^n }{\sqrt{2-\xi(\eta)}} \exp((t(1-a)/2)^2/2) \le
\\
\le 13 \exp\left(\frac{(R_2+H)^2}{8}\right).
\end{gathered}
\end{equation*}

As in the previous case, by Lemma~\ref{th:accuracy3} we get
$$
\vert Q_{j,n} - \J_{j,n}\vert \le (1/2+C_a/4)\varepsilon.
$$

\section{Verification}
\label{sect:experiment}

In this section we measure the actual accuracy of our algorithm in floating-point arithmetics. For the double-double and quad-double precision arithmetics, we use the QD library \cite{lib:QD}.

First we check the validity of the algorithm. We put $\varepsilon = 2\cdot 10^{-16}$ and use the double-double precision arithmetics (i. e. $10^{-32}$ relative accuracy). We calculate the values of $p'(t,r)$ and $u'(t,r)$ on the lattice $t = 1.01^n, r = 1.01^m$ with $n, m = -1000, \ldots, 1000$. The reference solution was calculated with the same arithmetic model and $\varepsilon = 4 \cdot 10^{-32}$. The maximal difference from the reference solution was $1.97 \times 10^{-16} \le \varepsilon$. It primarily originates from the cropping of the integration range.

Now we consider the influence of the floating-point arithmetics. We use $\varepsilon = 2\cdot 10^{-16}$ for the double-precision arithmetics and $\varepsilon = 4 \times 10^{-32}$ for the double-double precision one. The reference solutions was obtained by the same algorithm with and the quad-double precision arithmetics and $\varepsilon = 8 \times 10^{-64}$. The error of the numerical solution did not exceed $2.09 \times 10^{-15}$ for each variable in the double precision and $1.42 \times 10^{-30}$ in the double-double precision. The maximal error appears when the form \eqref{eq_form1} is used for $t+r \approx H$, and this is because the integrand changes its sign several times within the integration range.

The total time to compute the solution for 4 million points on Intel Core i7-10700 CPU core was 3.98 seconds in the double precision and 405 seconds in the double-double precision. The most time consuming approximation is the one based on the form \eqref{eq_form1} because of the Bessel function evaluations.

\appendix
\section*{Appendix A. Proof of Lemma~\ref{th:accuracy1}}

Put
$$
g(z) = (f(z) + f(-z))/2.
$$
By symmetry,
$$
Q = h\sum\limits_{k=-n}^{n} \exp\left(-\frac{(kh)^2}{2}\right) g(kh).
$$
Also put
$$
J = \int\limits_{-L}^{L} f(x) \exp(-x^2/2) dx.
$$

Let $\Gamma$ be the counterclockwise oriented rectangular contour with the corners $\pm L \pm 2\pi i/h$. Let $\Gamma_+ = \Gamma \cap \{\mathrm{Im}z > 0\}$ and $\Gamma_- = \Gamma \cap \{\mathrm{Im}z < 0\}$.
Then by the Cauchy residue theorem
$$
Q = \oint\limits_{\Gamma} g(z) \frac{\exp(-z^2/2)}{1 - \exp(-2\pi iz/h)} dz = 
\left(\int\limits_{\Gamma_+} + \int\limits_{\Gamma_-}\right)
g(z) \frac{\exp(-z^2/2)}{1 - \exp(-2\pi iz/h)} dz.
$$

Note that
$$
\left(\int\limits_{\Gamma_+} - \int\limits_{\Gamma_-}\right) g(z) \frac{\exp(-z^2/2)}{1 - \exp(-2\pi iz/h)} dz = 
\int\limits_{\Gamma_+} g(z) \exp(-z^2/2) dz = -J.
$$
The first identity in this chain is by the substitution $-z$ for $z$, with the use of 
$$
\frac{1}{1-\exp(z)} + \frac{1}{1-\exp(-z)} = 1.
$$
Thus, the integration error is 
$$
Q - J = \int\limits_{\Gamma_+} G(z) dz, \quad G(z) = 2g(z) \frac{\exp(-z^2/2)}{1 - \exp(-2\pi iz/h)}.
$$

Let $\gamma_+$ and $\gamma_-$ be the vertical segments of $\Gamma_+$, and $\gamma_0$ be its horizontal segment. 

First consider the integral over $\gamma_0$. Here $z = x + 2\pi i/h$ and
$$
1 - \exp(-2\pi iz/h) = 1 -  \exp(-2\pi i x/h)\exp(4\pi^2/h^2).
$$
Then
$$
\left\vert \int\limits_{\gamma_{0}} G(z) dz\right\vert \le  
2f_0 
\int\limits_{-L}^{L} \left\vert \frac{\exp(-x^2/2)\exp(2\pi^2/h^2)}{\exp(4\pi^2/h^2)-1} \right\vert dx
\le  
$$
$$
\le
2 \sqrt{2\pi} \frac{1}{1 - \exp(-4\pi^2/h^2)} f_0 \exp\left(-2\frac{\pi^2}{h^2}\right)
\le 5.2 f_0 \exp\left(-2\frac{\pi^2}{h^2}\right). 
$$
Here we used $h \le \pi$, which is by assumption.

On vertical segments we have $z = \pm L + iy$, $0 \le y \le 2\pi/h$, thus
$$
1 - \exp(-2\pi iz/h) = 1 - \exp(\mp 2\pi i L/h)\exp(2\pi y/h) = 
$$
$$
=
1 - \exp(\mp 2\pi i (n+1/2))\exp(2\pi y/h) =
1 + \exp(2\pi y/h)
$$
and
$$
\left\vert \int\limits_{\gamma_{\pm}}G(z) dz\right\vert \le  
2f_1 \exp(-L^2/2)
\int\limits_0^{2\pi/h}  \frac{\exp(y^2/2)}{1 + \exp(2\pi y/h)} dy.
$$
Using $y^2 \le (2\pi/h) y$ and dropping unit in the denominator we get
$$
\int\limits_0^{2\pi/h} \frac{ \exp(y^2/2)}{1 + \exp(2\pi y/h)} dy
\le  
\int\limits_0^{\infty} \exp\left(-\frac{\pi y}{h}\right) dy = \frac{h}{\pi}.
$$
Thus
$$
\left\vert \int\limits_{\gamma_{\pm}} G(z) dz\right\vert \le \frac{2h}{\pi} \exp(-L^2/2) f_1.
$$

Combining the estimates and using $\vert I - Q \vert \le \vert I - J \vert + \vert J - Q \vert$ we get the statement of the lemma.

\end{document}